\documentclass[a4paper,12pt]{article}
\usepackage[utf8x]{inputenc}
\usepackage[english]{babel}
\usepackage{graphicx}
\usepackage{amscd}
\usepackage{amsfonts}
\usepackage{latexsym}
\usepackage{amssymb,amsmath}
\usepackage[usenames]{color}
\usepackage{caption}
\usepackage{amscd}
\usepackage{psfrag}

\usepackage{amsthm}

%
%

\newtheorem*{teormain}{Main Theorem}
\newtheorem*{teorA}{Theorem A}
\newtheorem*{teorB}{Theorem B}
\newtheorem{teor}{Theorem}[section]
\newtheorem{cor}[teor]{Corollary}
\newtheorem{prop}[teor]{Proposition}
 \newtheorem{lemma}[teor]{Lemma}
 
 \newtheorem{remark}[teor]{Remark}
\newtheorem{defi}[teor]{Definition}

\def\C{\mathbb{C}}

\def\F{\mathcal{F}}

\def\D{\mathbb{D}}
\def\d{\mathcal{D}}
\def\R{\mathbb{R}}

\def\Z{\mathbb{Z}}

\def\H{\mathbb{H}}

\def\l0{_{\lambda_0}}
\def\i0{_{\iota_0}}

\begin{document}
\title{Mating quadratic maps with the modular group II}

\author{Shaun Bullett 
\and 
Luna Lomonaco}

%

\maketitle

\begin{abstract}

In 1994 S. Bullett and C. Penrose introduced the one complex parameter family of $(2:2)$ holomorphic correspondences $\F_a$:
$$\left(\frac{aw-1}{w-1}\right)^2+\left(\frac{aw-1}{w-1}\right)\left(\frac{az+1}{z+1}\right)
+\left(\frac{az+1}{z+1}\right)^2=3$$
and proved that for every value of $a \in [4,7] \subset \R$ the correspondence $\F_a$ is a mating between a quadratic polynomial
$Q_c(z)=z^2+c,\,\,c \in \R$, and the modular group $\Gamma=PSL(2,\Z)$. They conjectured that this is the case for every member of the family $\F_a$ which has $a$ in the connectedness locus.

We show here that matings between the 
modular group and rational maps in the parabolic quadratic family $Per_1(1)$ provide a better model: we prove that every member of the family $\F_a$ which has $a$ in the connectedness locus is 
such a mating.
\end{abstract}
\setcounter{tocdepth}{1}
\tableofcontents

\section{Introduction}

The analogies between the iteration of holomorphic maps and the action of Kleinian groups were first 
enumerated by Sullivan in the mid 1980s.
His landmark paper \cite{S}, where he proved the conjecture of Fatou that there are no wandering domains 
for a rational map on the Riemann sphere, includes the first version of what it is 
now called Sullivan's dictionary, in which definitions, theorems and conjectures in the world of holomorphic
maps are related to analogous definitions, theorems and conjectures in the world of Kleinian groups.
Sullivan draws attention to deep parallels between the
 Fatou set $F_f$ and Julia set $J_f$ of a holomorphic map $f$ on $\widehat \C$, and the ordinary set $\Omega(G)$ and limit set $\Lambda(G)$ respectively
 of a finitely generated Kleinian group $G$ acting on $\widehat \C$, and his proof of the no wandering domains
theorem for rational maps is inspired by the method used to prove Ahlfors' finiteness theorem in the world of Kleinian groups.\\

  	  \begin{figure}
  	    	  \centering
	 \includegraphics[height= 5.5cm]{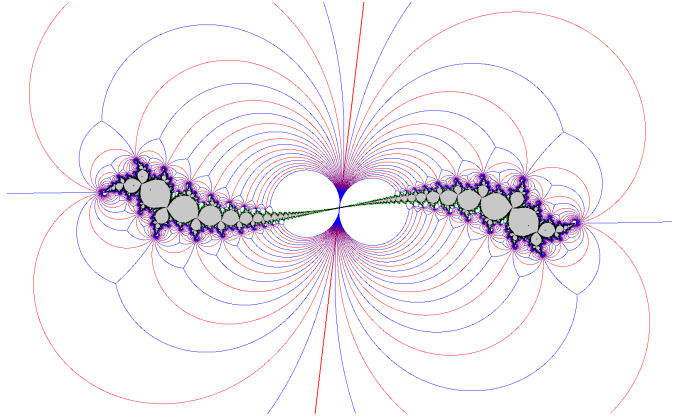}
 	  \caption{\small Limit set for $\F_a$, $a=4.565+0.420i$.}
	  \label{limset}
	 	  \end{figure}

Both rational maps and finitely generated Kleinian groups can be regarded as particular cases of correspondences.
 An $n$-to-$m$ holomorphic correspondence  on $\widehat \C$ is a multi-valued
 map $\F: z \rightarrow w$ defined by a
 polynomial relation $P(z,w)=0$. A rational map $f(z)= p(z)/q(z)$ becomes an $n$-to-$1$
correspondence defined by $P(z,w)=0$, where $P(z,w)= wq(z)-p(z)$, and any finitely generated Kleinian group 
$G$ with generators 
$$\gamma_j(z)=\frac{a_jz+b_j}{c_jz+d_j}$$ 
can be regarded as an $(n:n)$ correspondence by taking 
$$P(z,w)= \prod_{j=1}^n \left(w(c_jz+d_j)-(a_jz+b_j)\right). $$
For example, since 
$$\alpha(z)=z+1 \quad {\rm and} \quad \beta(z)=\frac{z}{z+1}$$ 
generate the modular group  $\Gamma=PSL(2,\Z)$, the orbits 
of $\Gamma$ on $\widehat \C$
are the orbits of the $(2:2)$
correspondence defined by 
$$(w-(z+1))(w(z+1)-z)=0.$$

The study of iterated holomorphic correspondences was initiated by Fatou \cite{F} in 1922, with an analysis 
of a family of examples `sur lesquels', he remarks, `on voit apparaitre d\'ej\`a certaines propri\'et\'es, assez  
diff\'erentes de celles auxquelles donnent lieu les cas d'it\'eration d\'ej\`a \'etudi\'es' [`in which one already sees
the appearence of certain properties somewhat different from those arising in the cases of iteration studied up till now']. He concludes his article with 
the comment that one may treat various examples of iterated algebraic functions in an analogous fashion, `mais une th\'eorie g\'en\'erale de ce probl\`eme ne parait pas facile. Nous pensons pouvoir y revenir ult\'erieurement'
[`but a general theory for this problem does not seem easy. We hope to return to this in the future'].
The next developments of which we are aware came in the 1990s, when McMullen and Sullivan in their foundational work \cite{MS}, defined a (one-dimensional) {\it holomorphic dynamical system} to be a collection
of holomorphic relations on a complex $1$-manifold, and developed a common framework in which rational maps, Kleinian groups and holomorphic correspondences can be treated simultaneously. At around the same time
researchers in integrable systems \cite {BV} were investigating the complexity of symmetric holomorphic correspondences associated with elliptic curves, a topic also prefigured in the introduction to Fatou's article.\\

Also in the 1990s, the first author and C. Penrose observed behaviour such as that
illustrated in Figure \ref{limset}, in a particular family of $(2:2)$ correspondences, 
namely the one parameter family $\F_a$ defined by 
$$\left(\frac{aw-1}{w-1}\right)^2+\left(\frac{aw-1}{w-1}\right)\left(\frac{az+1}{z+1}\right)+
\left(\frac{az+1}{z+1}\right)^2=3.$$
Computer plots appeared to show two copies (denoted in this article by $\Lambda_{a,-}$ and $\Lambda_{a,+}$) 
of the filled Julia set of a quadratic polynomial, together with an action of the modular group on the 
complement (denoted $\Omega_a$ here), prompting the question as to whether in the world of holomorphic correspondences there 
might exist `matings' between quadratic polynomials and the modular group. Bullett and Penrose \cite{BP}
constructed an abstract combinatorial mating between the modular group and any member of the quadratic 
family which has connected and locally connected filled Julia set (see Section \ref{mat}). 
Holomorphic correspondences realising these combinatorial matings are holomorphic realisations of
Minkowski's {\it question mark function} \cite{Min}, a homeomorphism from the unit interval to the positive real line
which sends a real number expressed in binary to a real number with a corresponding continued fraction expression. 
On the binary expression side of the mating is the Douady-Hubbard coding of rays for quadratic polynomials, which 
is key to combinatorial descriptions of Julia sets and renormalisation theory. On the continued fraction side, the 
action of the modular group is related to the generation of Farey sequences of rationals, and thence to the Riemann Hypothesis (we thank Charles Tresser for drawing our attention to the work of Franel \cite{Fr} and Landau \cite{Lan}, 
showing that the Riemann Hypothesis is equivalent to certain conditions concerning the uniformity of distribution 
of such sequences).\\

The main result of \cite{BP} is that
for $a$ in the real interval $[4,7]$ the holomorphic
correspondence $\F_a$ is indeed a mating between a (real) quadratic polynomial and the modular group. 
More generally, Bullett and Penrose conjectured that each 
$\F_a$ for which the parameter $a$ is in the connectedness locus for the family 
is a mating between a quadratic polynomial and the modular group. 
Their conjecture was subsequently proved for a large class of values of the
parameter $a$, by applying Haissinsky's technique of `pinching' to polynomial-like maps (see \cite{BH}). 
But the technique is not applicable for {\it all} values of $a$ in the connectedness locus, and we would argue
that the root cause of the difficulty is that the family of quadratic polynomials is the wrong model for the problem.
Whatever the value of $a$, the branch of $\F_a$ which fixes $z=0$ is parabolic, with multiplier at the 
parabolic fixed point equal to 1 (see Proposition \ref{power_series}).
This fact makes the use of polynomial-like mappings tricky and finally inefficient, and 
suggests that the optimum description of the correspondences $\F_a$  might be as matings 
between the modular group and members of some family of {\it parabolic} quadratic maps. 
As we shall demonstrate below, this is indeed the case, the family of maps being 
$$P_A:z \to z+\frac{1}{z}+A, \quad A \in \C,$$  which we recall are
the quadratic rational maps with a parabolic fixed point of multiplier $1$ at infinity and critical 
points at $\pm 1$.
Note that $P_{A'}$ is conformally conjugate to $P_{A''}$ if and only if $A'=\pm A''$; in Milnor’s notation the set of (conformal) conjugacy classes is denoted $Per_1(1)$.

	\begin{figure}
	 
 	\centering
 	\includegraphics[height= 5.5cm]{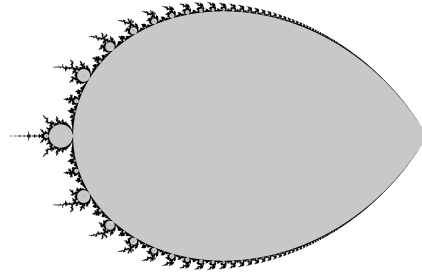}
 	\caption{\small The modular Mandelbrot set $M_{\Gamma}$}
 	\label{Mcorr}
 	\end{figure}
\begin{defi}\label{mat} We say that ${\mathcal F}_a$ is a mating between the rational quadratic map $P_A:z \to z+1/z+A$ 
and the modular group $\Gamma=PSL(2,{\mathbb Z})$ if


(i) the $2$-to-$1$ branch of ${\mathcal F}_a$ for which $\Lambda_{a,-}$ is invariant is hybrid equivalent 
to $P_A$ on $\Lambda_{a,-}$, and

(ii) when restricted to a $(2:2)$ correspondence from $\Omega_a$ to itself, ${\mathcal F}_a$ is conformally conjugate 
to the pair of M\"obius transformations $\{\alpha,\ \beta\}$ from the complex upper half plane ${\mathbb H}$ to itself. 

\end{defi}

Formal definitions of the sets $\Lambda_{a,-}$, $\Lambda_{a,+}$, and $\Omega_a$ are given in Section \ref{dynamics}. In the same Section we also define the {\it Klein combination locus} $\mathcal K\subset \C$ 
and the {\it connectedness locus} ${\mathcal C}_\Gamma\subset {\mathcal K}$ of the family of correspondences ${\mathcal F}_a$. Given these concepts, we are in a position to state the main result of this paper:

\begin{teormain}\label{mating}
For every $a\in {\mathcal C}_\Gamma$ the correspondence ${\mathcal F}_a$ is a mating between some rational map $P_A:z \to z+1/z+A$ and 
$\Gamma$.
\end{teormain}

The layout of this paper is as follows. In Section \ref{prelim} we assemble facts concerning {\it Fatou coordinates} 
and {\it parabolic-like mappings} that will be needed later.
In Section \ref{dynamics} we investigate some dynamical properties of the family $\F_a$ and in Section
\ref{thma} we prove:  
\begin{teorA}
 For every $a \in {\mathcal C}_\Gamma$,  when restricted to a $(2:2)$ correspondence from $\Omega_a$ to itself, ${\mathcal F}_a$ is conformally conjugate 
to the pair of M\"obius transformations $\{\alpha: z \to z+1,\ \beta: z \to z/(z+1)\}$ from the complex upper half plane ${\mathbb H}$ to itself. 
\end{teorA}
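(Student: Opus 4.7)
My plan is to uniformize $\Omega_a$ by the upper half plane $\mathbb{H}$ and check that the two branches of $\mathcal{F}_a$ restricted to $\Omega_a$ become, under this identification, the modular generators $\alpha,\beta$. Since $\mathcal{C}_\Gamma\subset\mathcal{K}$, the Klein combination structure is available for every $a\in\mathcal{C}_\Gamma$: we have a partition $\widehat{\mathbb{C}}=\Lambda_{a,-}\sqcup\Omega_a\sqcup\Lambda_{a,+}$ with $\Lambda_{a,\pm}$ disjoint, compact and connected, so $\Omega_a$ is a simply connected open set, and by the Riemann Mapping Theorem conformally equivalent to $\mathbb{H}$.

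The key algebraic observation is that after the M\"obius substitutions $u=J(z)=(az+1)/(z+1)$ and $v=K(w)=J(-w)=(aw-1)/(w-1)$, the defining equation reads $u^2+uv+v^2=3$, placing $(u,v)$ on the non-diagonal component of the covering curve of the Chebyshev cubic $Q(X)=X^3-3X$. This yields the factorisation $\mathcal{F}_a=I\circ\mathcal{G}_a$ where $I(z)=-z$ is a M\"obius involution and $\mathcal{G}_a=J^{-1}\circ\mathrm{Cov}_Q^0\circ J$ is a conjugate of the deleted covering correspondence of $Q$; the Chebyshev symmetry $Q(2\cos\theta)=2\cos 3\theta$ gives $\mathcal{G}_a^3=\mathrm{id}$ on any simply connected region avoiding the critical points $J^{-1}(\pm 2)=\{1/(a-2),-3/(a+2)\}$ of $\mathcal{G}_a$. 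By the Klein combination hypothesis these critical points, together with the analogues $\{1/(2-a),3/(a+2)\}$ on the $w$-side, lie in $\Lambda_{a,-}\cup\Lambda_{a,+}$; hence $\mathcal{F}_a|_{\Omega_a}$ is unramified and, on the simply connected $\Omega_a$, splits into two single-valued holomorphic bijections $A_a,B_a\colon\Omega_a\to\Omega_a$ of the form $A_a=I\circ G_a$ and $B_a=I\circ G_a^{-1}$ for a selected order-three branch $G_a$ of $\mathcal{G}_a$.

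A direct computation then yields the two relations $(A_aB_a^{-1})^3=\mathrm{id}$ (from $G_a^3=\mathrm{id}$) and $A_aB_a^{-1}A_a=I$, whence $(A_aB_a^{-1}A_a)^2=\mathrm{id}$; in particular $I$ is forced to preserve $\Omega_a$. Setting $S:=A_aB_a^{-1}A_a$ and $U:=A_aB_a^{-1}$, the inversion formulas $A_a=U^{-1}S$, $B_a=U^{-2}S$ show that $\langle A_a,B_a\rangle=\langle S,U\mid S^2=U^3=\mathrm{id}\rangle=\mathbb{Z}/2\ast\mathbb{Z}/3\cong\Gamma$. Both branches are parabolic: Proposition \ref{power_series} gives parabolicity of the branch through $(0,0)$, and an analogous local analysis at the second diagonal-crossing point $(-2/(a+1),2/(a+1))$ of the curve $u^2+uv+v^2=3$ gives parabolicity of the other branch. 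Transporting by a Riemann map $\phi_a\colon\Omega_a\to\mathbb{H}$ produces two parabolic elements of $\mathrm{PSL}(2,\mathbb{R})$ generating a subgroup isomorphic to $\Gamma$; since parabolic elements are of infinite order, this representation $\Gamma\to\mathrm{PSL}(2,\mathbb{R})$ is faithful, and by rigidity of the Fuchsian uniformization of the modular orbifold the embedding is unique up to $\mathrm{PSL}(2,\mathbb{R})$-conjugacy, so a suitable normalisation of $\phi_a$ gives $\phi_a\circ A_a\circ\phi_a^{-1}=\alpha$ and $\phi_a\circ B_a\circ\phi_a^{-1}=\beta$.

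I expect the main obstacle to be the construction of a globally single-valued order-three branch $G_a$ of $\mathcal{G}_a$ on $\Omega_a$: this requires the two-sheeted ramified cover of $z$-space by the parameter $u$ (on which $\mathcal{G}_a$ lifts to M\"obius multiplication by a primitive cube root of unity) to trivialise over $\Omega_a$, a topological condition that must be verified from the Klein combination geometry rather than purely algebraically. A secondary subtlety is the parabolicity of the second branch: this takes place at a diagonal-crossing point that is not a fixed point of the correspondence, so one cannot appeal directly to Proposition \ref{power_series} and must instead carry out a parallel local analysis using the algebraic symmetry of $u^2+uv+v^2=3$ at $u=v=-1$.
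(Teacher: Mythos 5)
Your overall architecture (uniformise $\Omega_a$, split $\F_a$ into two globally defined branches, recognise the group as $C_2*C_3$, then pin down the conjugacy class of the representation) parallels the paper's, but the decisive step --- showing the representation is $PSL(2,\Z)$ rather than some other discrete faithful representation of $C_2*C_3$ --- has a genuine gap. You argue that since the branch of $\F_a$ fixing $P$ has multiplier $1$ there (Proposition \ref{power_series}), its conjugate under a Riemann map $\phi_a:\Omega_a\to\H$ is a parabolic element of $PSL(2,\R)$. This does not follow: $P$ lies on $\partial\Omega_a$, not in $\Omega_a$, and the type of $\phi_a\circ A_a\circ\phi_a^{-1}$ is governed by the conformal geometry of $\Omega_a$ near $P$ --- concretely, by whether the quotient of a neighbourhood of $P$ in $\Omega_a$ by $\langle A_a\rangle$ is a punctured disc or an annulus of finite modulus --- and not by the local multiplier alone. (Compare $g(z)=z+1$ restricted to the invariant strip $0<\mathrm{Im}\, z<1$: the Riemann map $z\mapsto e^{\pi z}$ conjugates $g$ to the hyperbolic map $w\mapsto e^{\pi}w$.) This is exactly the point the paper labours over: it chooses Klein combination domains whose boundaries meet the parabolic axis transversally (Proposition \ref{nice-domains}), passes to a repelling Fatou coordinate (Proposition \ref{petals}, Corollary \ref{horizontals}), and checks that the fundamental region near $P$ maps onto neighbourhoods of both ends of $V_\theta^+/\langle w\mapsto w-1\rangle$, so that the end of the quotient orbifold is a cusp and not a funnel; Remark \ref{bad-domains} shows this can genuinely fail for tangential boundaries. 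Your proposal contains no substitute for this analysis, and without it the ``rigidity'' you invoke is unavailable: discrete faithful representations of $C_2*C_3$ in $PSL_2(\R)$ form a one-parameter family, distinguished precisely by whether $\sigma\rho$ is parabolic or hyperbolic.

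Two smaller points. First, your splitting of $\mathcal{G}_a$ into single-valued order-three branches needs the branch points $Z=\pm2$ of the correspondence and the critical points $Z=\pm1$ of its individual branches to lie in $\Lambda_a$; for $Z=-2$ and $Z=1$ this is automatic, but for $Z=2$ (equivalently, for the critical value $J(2)$ of the $2$-to-$1$ branch) it is the connectedness of $\Lambda_{a,-}$, not the Klein combination condition, that is responsible, and this needs an argument. (The paper instead extends the deck-transformation group of $Q$ inward from the B\"ottcher coordinate at $\infty$, using that the only critical point of $Q$ in $\Omega_a$ is the double one at $\infty$.) Second, the relations $S^2=U^3=\mathrm{id}$ only exhibit $\langle A_a,B_a\rangle$ as a quotient of $C_2*C_3$; to get the free product, and the faithfulness and discreteness of the action, you still need the ping-pong argument supplied by the Klein combination pair, which the paper imports from \cite{B}.
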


In Section \ref{thmb} we prove that every ${\mathcal F}_a$ with $a$ in the {\it Klein combination locus} $\mathcal K$ can be surgically modified to become a single-valued
{\it parabolic-like} map in the sense of \cite{L1} on a neighbourhood of the backward limit set 
$\Lambda_{a,-}$. Since this parabolic-like map can then be {\it straightened} (\cite{L1}) into 
a rational map of the form $P_A:z\to z+1/z+A$, we obtain the following:

\begin{teorB}\label{main}
For every parameter value $a \in {\mathcal K}$,  after a surgery supported outside the limit set,
the branch of $\F_a$ fixing $\Lambda_{a,-}$ restricts to a parabolic-like
mapping, and therefore on $\Lambda_{a,-}$ is hybrid equivalent to a member of the family $Per_1(1)$
of quadratic rational maps. 
\end{teorB}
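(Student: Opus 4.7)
The plan is to extract a parabolic-like structure from $\F_a$ on a carefully chosen domain, then invoke the straightening theorem from \cite{L1}. First I would localize by choosing a neighborhood $U_a$ of $\Lambda_{a,-}$ whose boundary is a Jordan curve passing through the parabolic fixed point $z=0$, and whose portion in $\Omega_a$ provides the two ``dividing arcs'' required by the definition of a parabolic-like map. Let $F_a$ denote the branch of $\F_a$ that preserves $\Lambda_{a,-}$; by Proposition \ref{power_series}, this branch has a parabolic fixed point at $0$ of multiplier $1$. Define $U'_a$ as the relevant component of the $F_a$-preimage of $U_a$, so that $F_a$ restricts to a proper degree $2$ map from $U'_a$ onto $U_a$ on $\Lambda_{a,-}$.

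The multi-valuedness of $\F_a$ then has to be resolved away from $\Lambda_{a,-}$. Exploiting the Klein combination hypothesis $a \in \mathcal{K}$, which guarantees a clean separation between the correspondence side and the group side of the dynamics, I would perform a quasiconformal surgery supported entirely in $\Omega_a \cap U_a$. The surgery replaces the unwanted branch of $\F_a$ by a quasiconformal interpolation that agrees with $F_a$ in a neighborhood of $\Lambda_{a,-}$ and joins smoothly to a chosen univalent branch near the outer boundary of $U_a$. The output is a single-valued quasiregular map $g_a: U'_a \to U_a$ of degree $2$, coinciding with $F_a$ on a neighborhood of $\Lambda_{a,-}$, whose support of quasiconformality is disjoint from the limit set.

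The remaining step is to verify that $g_a$ satisfies the axioms of a parabolic-like map from \cite{L1}: that $\partial U'_a$ and $\partial U_a$ share a boundary arc through $0$ on which the dynamics is conjugate to the parabolic translation in Fatou coordinates (using the material of Section \ref{prelim}), and that the filled Julia set $\bigcap_n g_a^{-n}(U_a)$ coincides with $\Lambda_{a,-}$. Once this is established, the straightening theorem in \cite{L1} produces a hybrid conjugacy between $g_a$ and some parabolic quadratic rational map in $Per_1(1)$, that is, a map of the form $P_A: z \mapsto z + 1/z + A$. Since this hybrid conjugacy is conformal a.e.\ on the filled Julia set, hybrid equivalence holds on $\Lambda_{a,-}$.

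The main obstacle is the simultaneous choice of $U_a$ and design of the surgery. The boundary arcs must be arranged so that, in Fatou coordinates at $z=0$, they project to lines bounding fundamental strips for the parabolic dynamics of the branch $F_a$; only then does the dividing-arc axiom of \cite{L1} hold. Moreover one must verify that the surgery can be carried out without interference from the forward orbits of the critical values of $\F_a$, and that the resulting map is genuinely proper of degree $2$ with no spurious branch points introduced. It is for this combinatorial and dynamical bookkeeping that the preparatory material of Sections \ref{prelim} and \ref{dynamics}, together with Theorem A, is essential.
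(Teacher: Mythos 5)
Your outline has the same global shape as the paper's argument: choose a domain around $\Lambda_{a,-}$ adapted to the Fatou coordinate at the parabolic point $P$, obtain the dividing arcs from invariant curves in the repelling petals, perform a quasiconformal surgery supported off the limit set, pull back the standard complex structure and straighten via \cite{L1}. But you never identify the one place where the correspondence actually fails to be a parabolic-like map, and the surgery you describe is aimed at the wrong spot. By Proposition \ref{domains} the branch of $\F_a$ fixing $\Lambda_{a,-}$ is already a single-valued holomorphic two-to-one map on $\F_a^{-1}(\Delta_J)$, and by Proposition \ref{dyn} it extends holomorphically across every boundary point except $S=\F_a^{-1}(P)\setminus\{P\}$ (the point $Z=-2$), where $\F_a$ is locally conjugate to the one-to-two correspondence $\zeta\mapsto\pm\sqrt{\zeta}$: the two branches collide at $S$ and are interchanged by the monodromy around it, so no continuous single-valued branch exists on any punctured neighbourhood of $S$. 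This cannot be sidestepped, because a parabolic-like domain $U$ must contain a full disc $D_1$ around $P$ (otherwise $U'=\F_a^{-1}(U)$ cannot protrude from $U$ along the repelling direction, as Definition \ref{parabolic-like-map} requires), and then $U'$ contains a full disc $D_2$ around $S$ --- precisely where no branch can be selected. There is no ``chosen univalent branch'' near $S$ to interpolate with, and the difficulty is not located ``near the outer boundary of $U_a$''.

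The paper's surgery is therefore local at $S$. Proposition \ref{lune} builds $V\supset\Lambda_{a,-}$ so that the boundary of $V'=\F_a^{-1}(V)$ has a cone of angle $\hat\theta=\pi-(\theta_1+\theta_2)>0$ at $P$ and of angle $2\hat\theta$ at $S$, which leaves a sector $\hat T_2$ at $S$ inside $V\setminus V'$; on the disc $D_2$ at $S$ the one-to-two correspondence onto $D_1$ is then replaced, in logarithmic coordinates, by a single-valued quasiconformal homeomorphism agreeing with the holomorphic branches outside $\hat T_2$. Because $\hat T_2\subset V\setminus V'$, its successive preimages are pairwise disjoint and avoid $\Lambda_{a,-}$, which is what makes the pulled-back Beltrami form invariant with bounded dilatation and the resulting conjugacy hybrid on the limit set. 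Your two stated obstacles are not the real ones: the critical point of the degree-two branch sits harmlessly in the interior of the domain, away from $S$, and is untouched by the surgery, so there is no issue of ``spurious branch points'' or of critical orbits; the genuine work is isolating $S$, proving the angle estimates of Proposition \ref{lune} that make room for the sector $\hat T_2$, and checking the disjointness of its preimages. Without these steps the verification that your $g_a$ is proper of degree two on a domain containing a neighbourhood of $S$ cannot be completed. (Theorem A, incidentally, is not needed here; Theorems A and B are independent ingredients of the Main Theorem.)
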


Note that since the Julia set of a rational map is the closure of the set of repelling periodic points,
and quasiconformal maps preserve the nature 
of periodic points, the theorem implies the following:
\begin{cor}
 For each $a \in {\mathcal K}$, the boundary of $\Lambda_{a,-}$ is the closure of the set of 
repelling periodic points of the branch of $\F_a$ fixing $\Lambda_{a,-}$.
\end{cor}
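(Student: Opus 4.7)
The plan is to derive the corollary directly from Theorem B together with the classical fact that the Julia set of a rational map is the closure of its repelling periodic points, transporting this fact across the hybrid equivalence.

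First, I would invoke Theorem B to obtain a parabolic-like restriction of the branch of $\F_a$ fixing $\Lambda_{a,-}$ (after a surgery supported off the limit set), together with its straightening to some $P_A\in Per_1(1)$ via a hybrid equivalence, i.e.\ a quasiconformal conjugacy $\phi$ defined on a neighborhood of $\Lambda_{a,-}$ which sends $\Lambda_{a,-}$ onto the filled Julia set $K(P_A)$ and hence $\partial\Lambda_{a,-}$ onto the Julia set $J(P_A)$. Next, by Fatou's classical theorem applied to the rational map $P_A$, the Julia set $J(P_A)$ coincides with the closure of the set of repelling periodic points of $P_A$.

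Then I would transport this density back through $\phi$. Since $\phi$ conjugates dynamics and is a homeomorphism, it sends periodic points to periodic points of the same period; and since it is quasiconformal with locally integrable distortion, it preserves the multiplier type at each periodic point (attracting / repelling / parabolic), so the image of a repelling cycle of the surgered branch is a repelling cycle of $P_A$ and vice versa. Applying $\phi^{-1}$ to the equality $J(P_A)=\overline{\{\text{repelling periodic points of }P_A\}}$ then yields that $\partial\Lambda_{a,-}$ is the closure of the repelling periodic points of the surgered branch lying in the neighborhood on which $\phi$ is defined.

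Finally, I would observe that because the surgery in Theorem B is supported outside the limit set, the surgered map and the original branch of $\F_a$ agree on a neighborhood of $\partial\Lambda_{a,-}$; in particular they share the same periodic points with the same multipliers there. Thus the repelling periodic points of the surgered branch on $\partial\Lambda_{a,-}$ are genuine repelling periodic points of the branch of $\F_a$ fixing $\Lambda_{a,-}$, completing the proof. The only delicate point I expect is the bookkeeping at the parabolic fixed point (which lies in $\partial\Lambda_{a,-}$ but is itself not repelling): this is not really an obstacle since the classical theorem already accommodates parabolic points in the Julia set as accumulation points of repelling cycles, and the same feature transfers verbatim through the quasiconformal conjugacy.
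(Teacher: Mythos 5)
Your argument is correct and is essentially the paper's own: the authors justify the corollary in one sentence by combining Theorem B with the fact that the Julia set of a rational map is the closure of its repelling periodic points and that quasiconformal conjugacies preserve the nature of periodic points, which is exactly the route you take (you merely spell out the additional observation that the surgery, being supported off $\Lambda_{a,-}$, does not alter the periodic points there).
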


The Main Theorem is a consequence of Theorems A and B.\\

As we shall see, the closed disc ${\mathcal D}=\{a:|a-4|\le 3\}$ is contained in the Klein combination locus ${\mathcal K}$
(apart from the point $a=1$, where $\F_a$ is undefined). Let $M_\Gamma$ denote the \textit{modular Mandelbrot set} 
${\mathcal C}_\Gamma \cap {\mathcal D}$. This set $M_\Gamma$ (Figure \ref{Mcorr}), which we believe to 
be the whole of ${\mathcal C}_\Gamma$, and which visibly resembles the classical Mandelbrot set, 
was first plotted in \cite{BP}.
In \cite{BL1} we investigate the dynamics of the family $\F_a$, and in particular we prove a new inequality of 
Yoccoz type which has as consequences the facts that 
$M_\Gamma$ is contained in a lune within ${\mathcal D}$ of internal angle strictly less than $\pi$, and that
for all $a\in M_\Gamma$ the limit set $\Lambda_{a,-}$ is contained in a dynamical space lune of internal angle
strictly less than $\pi$.
This in turn will allow us in \cite{BL2} to undertake the surgery of Theorem B in a sufficiently
uniform manner to deduce that $M_\Gamma$ is homeomorphic to the connectedness locus $M_1$ of 
the family $Per_1(1)$. Together with the proof announced by Pascale Roesch and Carsten Petersen that 
$M_1$ is homeomorphic to the classical Mandelbrot set $M$, this will finally prove 
that $M_\Gamma$ is homeomorphic to $M$. 
\\

{\bf Acknowledgements.} The authors are very grateful to Adam Epstein for first suggesting to them 
in 2011 that parabolic-like mappings might be applied to the family of correspondences $\F_a$, and to 
Adam Epstein, Carsten Petersen and Pascale Roesch for very helpful discussions at various stages of 
the work reported on here. This research has been partially supported by EU Marie-Curie IRSES Brazilian-European partnership in Dynamical Systems (FP7-PEOPLE-2012-IRSES 318999 BREUDS) and the Funda\c{c}\~ao de amparo a pesquisa do estado de S\~ao Paulo (Fapesp, process 2013/20480-7).



\section{Preliminaries}\label{prelim}
This section is devoted to a summary of results we will use during the article.
\subsection{Petals and Fatou coordinates}
A holomorphic map $g(z)=z+bz^2+\ldots$, with $b\neq 0$, defined in a neighbourhood of the origin, has a parabolic fixed point at the origin with multiplicity 1. A complex number \textbf{v} points in the repelling direction if $b$\textbf{v} is real and positive, and a complex number 
\textbf{w} points in the attracting direction if $b$\textbf{w} is real and negative.
An open set in a neigbourhood of the origin is called an attracting petal if it is mapped into itself and if each orbit eventually absorbed by it converges to the origin from the attracting direction \textbf{v}. Similarly, a repelling petal is an open set contained in its image and with orbits escaping from the origin in the repelling direction \textbf{w}.

There exists a well-established body of knowledge concerning attracting and repelling petals at parabolic fixed
points of holomorphic functions $g$, and {\it Fatou coordinates} on these petals. We shall make use of 
petals with the properties listed in the following Proposition.


\begin{prop}\label{petals}
For every holomorphic function $g$ as above, and every angle 
$0<\theta<\pi$, inside every neighbourhood of $0$ there exists a repelling petal $U_\theta^+$ containing an open sector of angle $2\theta$ centered at the origin
and symmetric with respect to the repelling direction. Each of these petals is
equipped with a conformal homeomorphism $\Phi^+$ (known as a Fatou coordinate) from $U_\theta^+$ to a subset $V_\theta^+$
of the complex plane consisting of all points $w=u+iv$ to the left of some curve which has asymptotes $u= -|v|\cot(\theta)-c$, 
with $c$ large so that $|w|$ is large for all $w\in V_\theta^+$
(see \cite{M2}), with the following properties:

(i) $\Phi^+$ is a composition $\psi^{-1}\phi$ where $\phi(z)=1/(-bz)$ and $\psi$ (defined on $V_\theta^+$) is asymptotic to the identity, in the sense that $\lim_{|w|\to\infty}\psi(w)/w=1$ for all $w\in V_\theta^+$;

(ii) $\Phi^+$ conjugates $g^{-1}$ on $U_\theta^+$  to $w \to w-1$ on $V_\theta^+$.

\end{prop}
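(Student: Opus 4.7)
The plan is to invoke the standard Leau--Fatou theory at the parabolic fixed point and extract from it the specific geometric properties asserted; all the analytic estimates can be found in Milnor \cite{M2}. First I would change coordinates by $\phi(z) = 1/(-bz)$, which sends a punctured neighbourhood of $0$ to a neighbourhood of $\infty$. A direct Taylor expansion of $g(z) = z + bz^2 + \cdots$ yields
\[
F(w) := (\phi\circ g\circ\phi^{-1})(w) = w + 1 + A/w + O(1/w^2) \qquad \text{as } |w|\to\infty,
\]
for an explicit constant $A$ depending on the Taylor coefficients of $g$. The repelling direction at $0$ for $g$, where $bz$ is real positive, corresponds under $\phi$ to the negative real ray at $\infty$, so the petal is to be sought there.

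Next, for the given $\theta\in(0,\pi)$ and a constant $c$ chosen sufficiently large, I would check that the region $V_\theta^+$ bounded on the right by a smooth curve with the asserted asymptotes $u = -|v|\cot(\theta) - c$ is mapped into itself by the branch of $F^{-1}$ defined near $\infty$. Since $F^{-1}(w) = w - 1 + O(1/w)$, for $w = u+iv \in V_\theta^+$ with $|w|$ large the image $F^{-1}(w) = u'+iv'$ satisfies $u' \le u - 1 + \varepsilon$ and $|v'| \le |v|+\varepsilon$ for any prescribed small $\varepsilon$; the inequality $u' < -|v'|\cot(\theta) - c$ then follows by a direct estimate. Pulling back, $U_\theta^+ := \phi^{-1}(V_\theta^+)$ is a repelling petal at $0$, and since $\phi$ is to leading order an inversion the opening angle $2\theta$ of $V_\theta^+$ around the negative real ray translates into $U_\theta^+$ containing an open sector of angle $2\theta$ at $0$ symmetric about the repelling direction.

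I would then construct the Fatou coordinate on $V_\theta^+$ by the classical recipe: set $\Psi_n(w) := F^{-n}(w) + n$, adjusting by an $A\log(-n)$ correction when $A\ne 0$ in order to absorb the resonant $A/w$ term of $F$. Using the asymptotic expansion of $F^{-1}$ together with Koebe-type distortion estimates, $\Psi_n$ converges uniformly on compact subsets of $V_\theta^+$ to a holomorphic map $\Psi$ satisfying $\Psi\circ F = \Psi + 1$ and $\Psi(w)/w\to 1$ as $|w|\to\infty$. Taking $\psi := \Psi^{-1}$ and $\Phi^+ := \Psi\circ\phi = \psi^{-1}\circ\phi$ yields the factorisation and asymptotic asserted in (i); statement (ii) is then the functional equation $\Psi\circ F = \Psi + 1$ transported back to $z$-coordinates.

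The main obstacle is the convergence of $\Psi_n$: the resonant $A/w$ term produces a logarithmic divergence in the naive limit, forcing the preliminary logarithmic adjustment above. Everything else reduces to the Leau--Fatou flower picture, controlling the inverse orbits $F^{-n}(w)$ within $V_\theta^+$ as they drift to $\infty$ at essentially unit speed along the negative real direction, together with a routine application of Koebe distortion to upgrade pointwise convergence to uniform convergence on compact sets.
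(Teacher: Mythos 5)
Your proposal is correct and is essentially the classical Leau--Fatou argument (conjugate to $F(w)=w+1+A/w+O(1/w^2)$ near $\infty$, verify invariance of the region $V_\theta^+$, and build the Fatou coordinate as a limit of normalised inverse iterates with the logarithmic correction for the resonant term). The paper does not give an independent proof but simply cites this standard material (Milnor, Ch.~10; Beardon, Ch.~6.5; and Milnor's lecture notes for the asymptote estimate), so your reconstruction matches the intended argument.
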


\begin{proof} This is an immediate consequence of Chapter 10 in \cite{M}, or Chapter 6.5 in \cite{Be}. For the estimate of the asymptotes
of the repelling petal in the $w$ coordinate, see the proof of Theorem 7.2 in \cite{M2}.
\qed
\end{proof}

An {\it attracting petal} $U_\theta^-$ is a repelling petal for $g^{-1}$, and has a Fatou coordinate conjugating
$g$ to $w \to w+1$ on the corresponding domain $V_\theta^+$ in $\C$. We observe that 
$U_\theta^+$ and $U_\theta^-$ are foliated by {\it invariant curves} (invariant under $g^{-1}$ and $g$ 
respectively), corresponding to the respective foliations of  
$V_\theta^+$ and $V_\theta^-$ by horizontal lines.  When $U_\theta^+\cap U_\theta^-$ is non-empty (which is always the case when $\theta>\pi/2$) the two foliations on the intersection will usually be different: nevertheless for both these foliations on the intersection, leaves which correspond to horizontal lines in the 
$w$-plane sufficiently far above or below the real axis, extend to invariant (under both $g$ and $g^{-1}$) topological 
circles, {\it horocycles}, in the $z$-plane.

\subsection{$Per_1(1)$ and parabolic-like maps}

Consider the family of quadratic rational maps having a
parabolic fixed point of multiplier $1$ at $\infty$. Normalizing by fixing the critical points at $\pm 1$, this family is
$$Per_1(1) = \{ P_A(z)= z+1/z+A \, | \, A \in \C \}/(A\sim -A)$$
For a map in $Per_1(1)$, denoting by $\Lambda$ the parabolic basin of attraction of infinity,
we can define the filled Julia set of $P_A$ to be
$K_A= \widehat \C \setminus \Lambda$ (the map $P_0(z)=z+1/z$ is the unique map in the family $Per_1(1)$ with two parabolic attracting
petals, and we set $K_0=\overline \H_l$).\\
 


A {\it parabolic-like map} is a map which behaves in a similar way to a member of the family $Per_1(1)$ in a neighbourhood of its filled Julia set. The definition extends the notion of a polynomial-like map to a map with a parabolic external class:

\begin{defi}\label{parabolic-like-map}
 A parabolic-like map is a 4-tuple ($f,U',U,\gamma$) where
\begin{itemize}
	\item $U',U$ are open subsets of $\C$, with $U',\,\, U$ and $U \cup U'$ isomorphic to a disc, and $U'$ not
contained in $U$,
	\item $f:U' \rightarrow U$ is a proper holomorphic map of degree $d$ with a parabolic fixed point at $z=z_0$ of
 multiplier 1,
	\item $\gamma:[-1,1] \rightarrow \overline {U}, \gamma(0)=z_0$ is an arc, forward invariant under $f$, $C^1$
on $[-1,0]$ and on $[0,1]$, and such
that
$$f(\gamma(t))=\gamma(dt),\,\,\, \forall -\frac{1}{d} \leq t \leq \frac{1}{d},$$
$$\gamma([ \frac{1}{d}, 1)\cup (-1, -\frac{1}{d}]) \subseteq U \setminus U',\,\,\,\,\gamma(\pm 1) \in \partial U.$$
It resides in repelling petal(s) of $z_0$ and it divides $U',U$ into $\Omega', \Delta'$ and $\Omega, \Delta$
respectively, such that $\Omega' \subset \subset U$
(and $\Omega' \subset \Omega$), $f:\Delta' \rightarrow \Delta$ is an isomorphism and
$\Delta'$ contains at least one attracting fixed petal of $z_0$. We call the arc $\gamma$ a \textit{dividing arc}.

\end{itemize}
\end{defi}

The filled Julia set of a parabolic-like map $(f,U',U,\gamma)$ is the set of points that never escape $\Omega'\cup \{z_0\}$, this is
$K_{f}=\{z\in U'\;\vert\; \forall n \geq 0,\,\,\ f^{n}(z)\in U' \setminus \Delta'  \} $, and the Julia set is defined as
$J_{f}:=\partial K_{f}$ (see \cite{L1}).
By the Straightening Theorem for parabolic-like maps, any degree $2$ parabolic-like map is hybrid equivalent to a member of the family
$Per_1(1)$, a unique such member if the filled Julia set is connected.
\section{Dynamics of $\F_a$}\label{dynamics}

We consider the family of $(2:2)$ holomorphic correspondences on the Riemann sphere which have 
the form ${\mathcal F}_a:z \to w$, where 
$$\left(\frac{az+1}{z+1}\right)^2+\left(\frac{az+1}{z+1}\right)\left(\frac{aw-1}{w-1}\right)
+\left(\frac{aw-1}{w-1}\right)^2=3$$
for a parameter $a\in {\mathbb C}$, $a\ne 1$. 
The reason for studying this particular family is the following lemma (the content of which is in \cite{BP}, 
repeated here to establish notation) together with Proposition 1.4 of \cite{BH}, which states that every 
mating between a quadratic map and the modular group which supports a {\it compatible involution} 
(see \cite{BH}) is conformally conjugate to a member of this family.

\begin{lemma}\label{factorization}
In the coordinate $Z=\frac{az+1}{z+1}$, the correspondence $\F_a$ is the composition $J\circ Cov_0^Q$ where
$$J(Z)=\frac{(a+1)Z-2a}{2Z-(a+1)}$$
is the involution which has fixed points $1$ and $a$, and $Cov_0^Q:Z \to W$ is the deleted covering correspondence of the rational map $Q(Z)=Z^3-3Z$.

%
%
\end{lemma}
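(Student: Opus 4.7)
The proof I would give is essentially a direct computation, organised around the right choice of coordinates on each side of the correspondence.

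First I would introduce \emph{two} different Möbius coordinates, one on each side of the correspondence: $Z=\frac{az+1}{z+1}$ on the $z$-side and $W=\frac{aw-1}{w-1}$ on the $w$-side (note the sign asymmetry, dictated by the asymmetry of the defining equation). Under this substitution the relation defining $\F_a$ becomes the symmetric cubic
$$Z^2+ZW+W^2=3.$$
The next step is to recognise this as (part of) a covering correspondence. Since
$$Q(Z)-Q(W)=Z^3-W^3-3(Z-W)=(Z-W)\bigl(Z^2+ZW+W^2-3\bigr),$$
the locus $\{Q(Z)=Q(W)\}$ is the union of the diagonal $\{Z=W\}$ with the curve $\{Z^2+ZW+W^2=3\}$. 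Deleting the diagonal gives precisely the deleted covering correspondence $Cov_0^Q$, so in the coordinates $(Z,W)$ chosen above, $\F_a$ \emph{is} $Cov_0^Q$.

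Now the statement of the lemma uses the single coordinate $Z$ on both sides. I must therefore compare $W$ with $Z(w):=\frac{aw+1}{w+1}$. Inverting $W=\frac{aw-1}{w-1}$ gives $w=\frac{W-1}{W-a}$; substituting this into $Z(w)$ and simplifying numerator and denominator yields
$$Z(w)=\frac{(a+1)W-2a}{2W-(a+1)}=J(W).$$
Thus the correspondence $\F_a$, expressed entirely in the $Z$-coordinate, is $J\circ Cov_0^Q$, as claimed.

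Finally I would check the two stated properties of $J$: its associated $2\times 2$ matrix $\bigl(\begin{smallmatrix}a+1 & -2a\\ 2 & -(a+1)\end{smallmatrix}\bigr)$ has trace zero, hence $J$ is an involution; and solving $J(Z)=Z$ reduces to $(Z-1)(Z-a)=0$, so its fixed points are $1$ and $a$. There is no real obstacle here beyond guessing the correct ``twisted'' coordinate $W=\frac{aw-1}{w-1}$ on the $w$-side; once this is in place the whole statement is extracted by routine Möbius algebra and the factorisation of $Q(Z)-Q(W)$.
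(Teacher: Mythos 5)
Your proof is correct and is essentially the same computation as the paper's, just run in the opposite direction: the paper builds $J\circ Cov_0^Q$ and changes coordinates to recover the defining equation of $\F_a$, while you start from that equation and unwind it, using the same factorisation of $Q(Z)-Q(W)$ and the same M\"obius identity $J\bigl(\frac{aw+1}{w+1}\bigr)=\frac{aw-1}{w-1}$.
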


\begin{proof}
Consider the map $Q(Z)=Z^3-3Z$. It has a double critical point at infinity and simple critical points at $\pm 1$, and up to pre- and post-composition 
 by M\"obius transformations, every degree $3$ rational map with exactly $3$ distinct critical points is equivalent to $Q(Z)$.
\\
 
Let $Cov^Q:Z \rightarrow W$ be the $(3:3)$ covering correspondence of $Q$, which is the 
correspondence exchanging the preimages of $Q$, or in other words acting on the fibres of $Q$. This is the correspondence defined by
$$Q(Z)=Q(W),$$
or more explicitly by
$$Z^3-3Z=W^3-3W.$$

Let $Cov^Q_0:Z \rightarrow W$ be the $(2:2)$ correspondence defined by
$$\frac{Q(Z)-Q(W)}{Z-W}=0,$$
that is,
$$Z^2+ZW+W^2=3.$$
This is called the \textit{deleted covering correspondence} of $Q$, since its graph is obtained from that of $Cov^Q$ by deleting
the graph of the identity. \\

Post-composing this last correspondence by the involution $W \rightarrow J(W)$ we obtain the $(2:2)$ correspondence 
defined by the polynomial
$$Z^2+Z (J(W)) + (J(W))^2=3.$$
This is the correspondence 
 $$Z^2+Z \left(\frac{(a+1)W-2a}{2W-(a+1)}\right)+ \left(\frac{(a+1)W-2a}{2W-(a+1)}\right)^2=3,$$
 which is, via the change of coordinates

$$Z=\frac{az+1}{z+1}, \ \ W=\frac{aw+1}{w+1},$$
the correspondence $\F_a$.
\qed
\end{proof}

Note that in the coordinate $z$, the involution $J$ becomes $z \leftrightarrow -z$. The choice of whether to work in the coordinate
$Z$ or in the coordinate $z$ depends on whether it is more convenient to have a simple expression for $Cov^Q_0$ or for $J$.
We will denote by $P$ the common fixed point of $Cov^Q_0$ and $J$ ($P$ is the point $Z=1$ or $z=0$ in our 
two coordinate systems).\\

By a {\it fundamental domain} for $Cov^Q_0$ we shall mean a maximal
open set which is disjoint from its image under $Cov^Q_0$. (In this article fundamental domains will 
always be open sets.)

\begin{defi}\label{Klein-comb} The {\it Klein combination locus} $\mathcal K$ for the family of 
correspondences ${\mathcal F}_a$ 
is the set of parameter values $a$ for which there
exist simply-connected fundamental domains $\Delta_{Cov}$ and $\Delta_{J}$ for $Cov^Q_0$ and $J$ 
respectively, bounded by Jordan curves, such that
$$\Delta_{Cov} \cup \Delta_{J} = \hat{\mathbb C}\setminus\{P\}.$$
We call such a pair of fundamental domains $(\Delta_{Cov},\Delta_{J})$ a {\it Klein combination pair}.

\end{defi}

\begin{defi}\label{standard-pair}
For $a$ in ${\mathcal D}=\{a:|a-4|\le 3\}$, the {\it standard pair of fundamental domains} is that given by taking
 $\Delta_{Cov}$ to be the region of the $Z$-plane ${\mathbb C}$ to the right of $Cov^Q_0((-\infty,-2])$, and 
$\Delta_{J}$ to be complement in $\hat{\mathbb C}$ of the closed round disc in the $Z$-plane 
$\hat{\mathbb C}$ which has centre 
on the real axis and boundary circle through the points $1$ and $a$. 
\end{defi}
	\begin{figure}
	
	 \centering
	  \psfrag{a}{\small $a$}
\psfrag{L}{\small $L'$}
 \psfrag{A}{\small $\overline \Delta_{J}$}
\psfrag{-2}{\small $-2$}
\psfrag{1}{\small $1$}
\psfrag{2}{\small $2$}
 \psfrag{C}{\small $\Delta_{Cov^Q_0}$}
 	\includegraphics[height= 5.5cm]{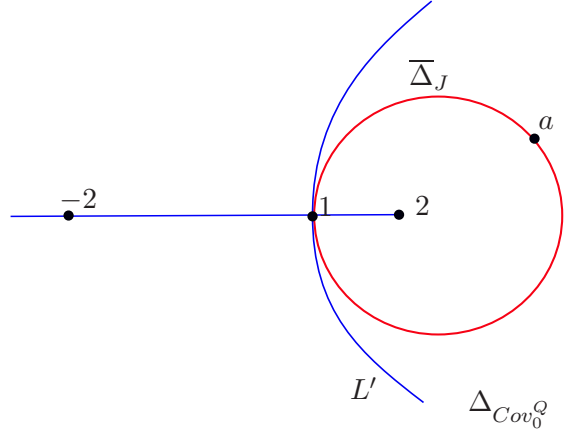}
 	\caption{\small Standard fundamental domains for $\F_a$.}
 	\label{standardfundamentaldomain}
 	
	\end{figure}

\begin{prop}\label{23}
For all $a \in {\d}$ (apart from the parameter value $a=1$ where the correspondence is undefined), the 
standard pair of fundamental domains is a Klein combination pair. Hence 
${\d}\setminus\{1\}\subset {\mathcal K}$.
\end{prop}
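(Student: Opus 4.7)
The plan is to verify in turn the three conditions defining a Klein combination pair: each of $\Delta_{Cov}$ and $\Delta_J$ is a simply-connected fundamental domain bounded by a Jordan curve, and together they exhaust $\widehat\C\setminus\{P\}$.

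For $\Delta_J$ the key point is that $J$ is a M\"obius involution with two fixed points, $1$ and $a$, so that $J$ is M\"obius-conjugate to $z\mapsto -z$ (fixed points $0,\infty$). In that model every line through the origin is invariant and has its two complementary half-planes swapped by the involution, so transporting back every circle through both $1$ and $a$ is $J$-invariant with $J$ exchanging the two complementary round discs. The standard circle (centre on the real axis, through $1$ and $a$) is one such, and its exterior $\Delta_J$ is therefore a simply-connected fundamental domain for $J$ bounded by a Jordan curve.

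For $\Delta_{Cov}$ I would compute the boundary explicitly. Substituting $Z=-t$, $t\ge 2$, into $W^2+ZW+Z^2=3$ gives $W=(t\pm i\sqrt{3t^2-12})/2$, which parameterises the right branch of the hyperbola $u^2-v^2/3=1$ in the coordinate $W=u+iv$. Since $(-\infty,-2]$ is mapped bijectively to itself by $Q$, the full preimage $Q^{-1}((-\infty,-2])$ is this ray together with the hyperbola branch; a single test point (say $Z=2$, where $Q^{-1}(Q(2))=\{-1,-1,2\}$ has only $2\in\Delta_{Cov}$) then shows that $Q$ restricts to a conformal bijection $\Delta_{Cov}\to \widehat\C\setminus(-\infty,-2]$. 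Thus $\Delta_{Cov}$ is simply-connected, bounded by a Jordan curve (the hyperbola branch closed up through $\infty$), and a fundamental domain for the degree-$3$ cover $Q$; in particular it is a fundamental domain for $Cov_0^Q$.

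The main step is the union identity $\Delta_{Cov}\cup\Delta_J=\widehat\C\setminus\{P\}$, equivalent to showing that the closed disc $\overline{D_J}$ meets $\widehat\C\setminus\Delta_{Cov}$ only at $P=1$. Writing $a=a_1+ia_2$, the perpendicular bisector of the chord from $1$ to $a$ meets the real axis at $c=(|a|^2-1)/(2(a_1-1))$, with radius $r=|a-1|^2/(2(a_1-1))=c-1$. The hypothesis $|a-4|^2\le 9$ rewrites as $|a|^2\le 8a_1-7$, giving $c\le 4$ and hence $r\le 3$, sharp precisely on $\partial\mathcal D$. Intersecting the circle $(u-c)^2+v^2=r^2$ with the hyperbola $u^2-v^2/3=1$ reduces to $4u^2-2cu+(c^2-r^2-3)=0$; since both curves pass through $P$, $u=1$ is one root, and Vieta gives the other as $u=(c-2)/2$, for which $v^2=3c(c-4)/4\le 0$ whenever $c\in(1,4]$. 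So the two curves meet only at $P$. Since the disc's centre $(c,0)$ lies in $\Delta_{Cov}$ (as $c>1$), connectedness of the disc forces $\overline{D_J}\subset\overline{\Delta_{Cov}}$ with $\overline{D_J}\cap\partial\Delta_{Cov}=\{P\}$, which is exactly what is needed. I expect the main obstacle to be the sharpness of the bound $r\le 3$: it is equivalent to the circle's curvature $1/r$ at $P$ being at least that of the hyperbola ($=1/3$) there, and this is what pins down $\mathcal D$ as the closed disc of radius $3$ about $4$.
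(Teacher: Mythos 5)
Your proof is correct and takes essentially the same route as the paper: identify $\partial\Delta_{Cov}$ explicitly as the branch through $Z=1$ of the hyperbola $u^2-v^2/3=1$ (the paper's curve $L'$) and check that the circle through $1$ and $a$ with centre on the real axis meets it only at $Z=1$. The only differences are ones of detail — you verify the intersection for the general disc of centre $c\in(1,4]$ where the paper computes only the extremal circle $|Z-4|=3$, and you prove the fundamental-domain property of $\Delta_{Cov}$ directly where the paper cites Example 1.2 of \cite{B} — so your argument is somewhat more self-contained but not essentially different.
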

\begin{proof}
The real line interval $L=[-\infty,+2]$ has inverse image $Q^{-1}(L)$ the line interval $L$ itself, together with
a curve $L'$ which crosses $L$ orthogonally at $Z=1$ and runs off towards $\infty$ in directions approaching
angles $\pm\pi/3$ to the positive real axis (Fig. \ref{standardfundamentaldomain}). This line $L'$ is the image of $[-\infty,-2]$ under 
$Cov^Q_0$: an elementary computation shows that
$$L'=\{\left(1+\frac{t}{2}\right)\pm i \sqrt{3\left(t+\left(\frac{t}{2}\right)^2\right)}:t \in [0,\infty]\}.$$
Now the component of $\C\setminus L'$ which lies to the right of $L'$ is a fundamental domain 
for $Cov^Q_0$, that is to say it is a maximal
open set which is disjoint from its image under $Cov^Q_0$ (see also Example 1.2 in \cite{B}). But this 
component is our {\it standard} fundamental domain for $Cov^Q$ (Definition \ref{standard-pair}.)
\\

The standard $\Delta_{J}$ is self-evidently a fundamental domain for the involution $J$, so it only 
remains to verify that for $a\in \d \setminus\{1\}$, the domains $\Delta_{Cov}$
and $\Delta_{J}$ satisfy the Klein combination condition. However an elementary computation 
shows that $L'$ meets the circle which has centre $Z=4$ and radius $3$ at the single point $Z=1$. 
It follows that $\Delta_{Cov}\cup \Delta_{J}\supseteq \hat{\C}\setminus\{1\}$ for all $a\in {\d}\setminus\{1\}$.
\qed
\end{proof}

\begin{prop}\label{domains}
 For every $a \in \mathcal{K}$ and Klein combination pair $(\Delta_{Cov},\Delta_{J})$, the correspondence $\F_a$ has the following properties when its domain and co-domain are restricted as indicated: 
 
\begin{itemize}
 

\item
$\F_a^{-1}(\overline{\Delta}_J)\subset\overline{\Delta}_J$, and
$\F_a|: \F_a^{-1}(\overline{\Delta}_J) \to \overline{\Delta}_J$
is a (single-valued, continuous) $2$-to-$1$ map;
\item
$\F_a(\hat{\mathbb C}\setminus \Delta_J)\subset \hat{\mathbb C}\setminus \Delta_J$, and
$\F_a|: \hat{\mathbb C}\setminus \Delta_J \to \F_a(\hat{\mathbb C}\setminus \Delta_J)$
is a $1$-to-$2$ correspondence, conjugate via $J$ to $\F_a^{-1}|:\overline{\Delta}_J \to \F_a^{-1}(\overline{\Delta}_J)$.

\end{itemize}

\end{prop}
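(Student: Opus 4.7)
The plan is to exploit the factorisation $\F_a = J\circ Cov^Q_0$ from Lemma~\ref{factorization} together with the symmetry (self-inverse property) of $J$ and $Cov^Q_0$, which gives $\F_a^{-1} = Cov^Q_0\circ J$. Then
$$\F_a^{-1}(\overline{\Delta}_J) \;=\; Cov^Q_0(J(\overline{\Delta}_J)) \;=\; Cov^Q_0(\hat{\C}\setminus \Delta_J),$$
since $J$ exchanges $\Delta_J$ with $\hat{\C}\setminus \overline{\Delta}_J$ and preserves $\partial \Delta_J$ setwise. I chain three consequences of the Klein combination hypothesis and the fundamental-domain property: (i) $\hat{\C}\setminus \Delta_J\subseteq \overline{\Delta}_{Cov}$, because $\hat{\C}\setminus \Delta_J = (\Delta_{Cov}\setminus \Delta_J)\cup\{P\}$ and $P\in \overline{\Delta}_{Cov}$ as a common fixed point of $Cov^Q_0$; (ii) $Cov^Q_0(\overline{\Delta}_{Cov})\subseteq \hat{\C}\setminus \Delta_{Cov}$, which follows by continuity of the correspondence from the disjointness $Cov^Q_0(\Delta_{Cov})\cap \Delta_{Cov}=\emptyset$; (iii) $\hat{\C}\setminus \Delta_{Cov}\subseteq \overline{\Delta}_J$, again from Klein combination. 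Combining: $\F_a^{-1}(\overline{\Delta}_J)\subseteq \hat{\C}\setminus \Delta_{Cov}\subseteq \overline{\Delta}_J$, the first-bullet inclusion.

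For the 2-to-1 count, given $w\in \overline{\Delta}_J$, I use $\F_a^{-1}(w) = Cov^Q_0(J(w))$ with $J(w)\in \overline{\Delta}_{Cov}$: since $Q$ is univalent on $\Delta_{Cov}$, the $Q$-fibre through $J(w)$ consists of $J(w)$ itself plus two further points making up $Cov^Q_0(J(w))$, both lying in $\hat{\C}\setminus \Delta_{Cov}\subseteq \overline{\Delta}_J$. For single-valuedness, I pick $z\in \F_a^{-1}(\overline{\Delta}_J)\subseteq \hat{\C}\setminus \Delta_{Cov}$ and write $Cov^Q_0(z) = \{u,u'\}$. Since $z\notin \Delta_{Cov}$, exactly one element of the $Q$-fibre $\{z,u,u'\}$ lies in the fundamental domain; say it is $u$, so $u\in \overline{\Delta}_{Cov}$ and $u'\in \hat{\C}\setminus \overline{\Delta}_{Cov}\subseteq \Delta_J$ (using Klein combination and $P\in \overline{\Delta}_{Cov}$). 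Then $J(u')\in J(\Delta_J) = \hat{\C}\setminus \overline{\Delta}_J$, while by hypothesis at least one branch of $\F_a(z) = \{J(u),J(u')\}$ lies in $\overline{\Delta}_J$; that branch must therefore be $J(u)$, and it is unique.

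The second bullet then follows from the first via the $J$-conjugacy $J\circ \F_a\circ J = \F_a^{-1}$, obtained from $\F_a=J\circ Cov^Q_0$ and the fact that $J$ and $Cov^Q_0$ are involutions; this conjugacy interchanges the roles of $\overline{\Delta}_J$ and $\hat{\C}\setminus \Delta_J$, converting the single-valued 2-to-1 map of the first bullet into the claimed 1-to-2 correspondence. The main obstacle will be careful bookkeeping at the boundary locus $\partial \Delta_J\cup \partial \Delta_{Cov}$, in particular at $P$, at points of $\partial \Delta_J\cap \partial \Delta_{Cov}$, and at the critical points of $Q$ where the two sheets of $Cov^Q_0$ coalesce: at these points the above inclusions must be interpreted with multiplicity, and the 2-to-1 and single-valuedness assertions read in the branched-cover sense natural to $Cov^Q_0$, on which the continuity claim in the first bullet also rests.
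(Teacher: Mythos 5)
Your proposal is correct and follows essentially the same route as the paper: the identical chain of inclusions $\F_a^{-1}(\overline{\Delta}_J)=Cov_0^Q(\hat{\C}\setminus\Delta_J)\subset Cov_0^Q(\overline{\Delta}_{Cov})\subset\hat{\C}\setminus\Delta_{Cov}\subset\overline{\Delta}_J$ drawn from the Klein combination condition, the fundamental-domain property of $\Delta_{Cov}$ for the single-valued $2$-to-$1$ claim, and the conjugacy $J\circ\F_a\circ J=\F_a^{-1}$ for the second bullet. The only differences are presentational: you count fibres pointwise where the paper instead restricts the globally defined single-valued $2$-to-$1$ map $Cov_0^Q|:Cov_0^Q(\Delta_{Cov}\cup\{P\})\to\Delta_{Cov}\cup\{P\}$, and the boundary bookkeeping you flag at the end is absorbed in the paper by systematically working with $\Delta_{Cov}\cup\{P\}$ and the inclusion $\overline{\Delta}_J\subset J(\Delta_{Cov}\cup\{P\})$ rather than with $\overline{\Delta}_{Cov}$.
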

	\begin{figure}
	
	 \centering
	  \psfrag{a}{\small $a$}
\psfrag{A}{\small $\overline \Delta_{J}$}
\psfrag{D}{\small $D_J$}
\psfrag{fD}{\small $\F_a(D_J)$}
  \psfrag{2}{\small $(2:1)$}
 \psfrag{1}{\small $(1:2)$}
  \psfrag{3}{\small $(1:1)$}
\psfrag{F}{\small $\F_a^{-1}(\overline \Delta_{J})$}
\psfrag{e}{\small $\F_a^{-2}(\overline \Delta_{J})$}
 	\includegraphics[height= 6cm]{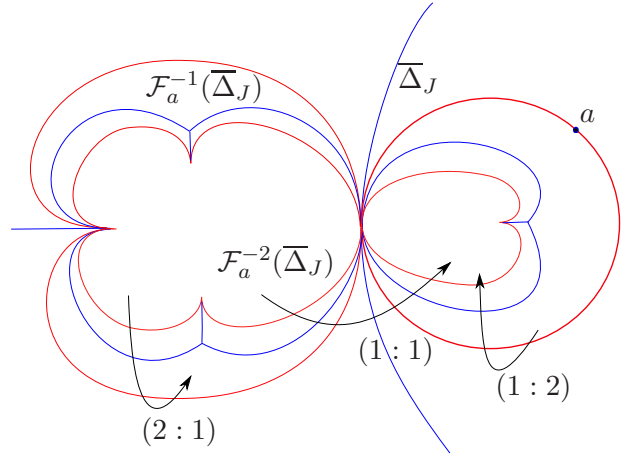}
 	\caption{\small Images and preimages of $\overline \Delta_{J}$ in the $Z$-coordinates.}
 	\label{imagepreimage}
 	
	\end{figure}
\begin{proof} 

From the Klein Combination condition (Definition \ref{Klein-comb}) we have that $\hat{\C}\setminus\Delta_J\subset\overline{\Delta}_{Cov}$ and 
$\hat{\C}\setminus \Delta_{Cov}\subset \overline{\Delta}_J$. Thus (see Figure \ref{imagepreimage}):
$$\F_a^{-1}(\overline{\Delta}_J)=Cov_0^Q\circ J(\overline{\Delta}_J)=Cov_0^Q(\hat{\C}\setminus\Delta_J)
\subset Cov_0^Q(\overline{\Delta}_{Cov})=\hat{\C}\setminus \Delta_{Cov}\subset \overline{\Delta}_J.$$
Now note that
$$Cov_0^Q|: Cov_0^Q(\Delta_{Cov}\cup\{P\}) \to \Delta_{Cov}\cup\{P\}$$
is  a (single-valued, continuous) $2$-to-$1$ map, and so the same is true for
$$\F_a=J\circ Cov_0^Q|:Cov_0^Q(\Delta_{Cov}\cup\{P\} ) \to J(\Delta_{Cov}\cup\{P\}).$$
Since $\overline{\Delta}_J\subset J(\Delta_{Cov}\cup\{P\})$ by the Klein Combination condition, and
also $\F_a^{-1}(\overline{\Delta}_J)=Cov_0^Q(\hat{\C}\setminus\Delta_J)\subset Cov_0^Q(\Delta_{Cov}\cup\{P\})$
by the same condition, it follows that 
$$\F_a|: \F_a^{-1}(\overline{\Delta}_J) \to \overline{\Delta}_J$$
is also a (single-valued, continuous) $2$-to-$1$ map.\\

As $J\circ \F_a=Cov_0^Q=\F_a^{-1}\circ J$ we deduce that
$$\F_a^{-1}|:J( \F_a^{-1}(\overline{\Delta}_J)) \to J(\overline{\Delta}_J)$$
is a $2$-to-$1$ map. But $J(\overline{\Delta}_J)=\hat{\C}\setminus\Delta_J$ and $J(\F_a^{-1}(\overline{\Delta}_J))=\F_a(\hat{\C}\setminus\Delta _J)$. Thus 
$$\F_a^{-1}|:\F_a(\hat{\mathbb C}\setminus \Delta_J) \to  \hat{\mathbb C}\setminus \Delta_J$$
is a $2$-to-$1$ map, and so its inverse 
$$\F_a|: \hat{\mathbb C}\setminus \Delta_J \to \F_a(\hat{\mathbb C}\setminus \Delta_J)$$
is a $1$-to-$2$ correspondence. Moreover this $1$-to-$2$ correspondence is conjugate, via $J$, to 
$\F_a^{-1}|:\overline{\Delta}_J \to \F_a^{-1}(\overline{\Delta}_J)$,  and it follows from
$\F_a^{-1}(\overline{\Delta_J})\subset \overline{\Delta_J}$ that 
$\F_a(\hat{\C}\setminus\Delta_J)\subset\hat{\C}\setminus\Delta_J$. \qed
\end{proof}
We next examine the behaviour of $\F_a$ around the fixed point $P$ ($Z=1$).
\begin{prop} \label{power_series}
Let $\zeta=Z-1$. 
When $a\ne 7$ the power series expansion of the branch of $\F_a$ which fixes $\zeta=0$ has the form:
$$\zeta \to \zeta + \frac{a-7}{3(a-1)}\zeta^2 + \ldots$$
and so the Leau-Fatou flower at the fixed point has a single attracting petal. 
When $a=7$  the expansion has the form:
$$\zeta \to \zeta  + \frac{1}{27}\zeta^4 + \ldots$$
and so the flower at the fixed point has three attracting petals.
\end{prop}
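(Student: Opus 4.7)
The plan is to compute the local power series of the branch of $\F_a$ fixing $Z=1$ by exploiting the factorisation $\F_a = J \circ Cov_0^Q$ from Lemma \ref{factorization}. The point $Z=1$ is fixed by both $J$ and by the relevant branch of $Cov_0^Q$: implicitly differentiating $Z^2+ZW+W^2=3$ at $Z=W=1$ gives $W'(1)=-1$, while $J$ is a Möbius involution fixing $1$ so also has derivative $-1$ there. Hence $\F_a'(1)=1$, confirming the parabolic nature of the fixed point, and one must push the expansion to higher order to read off the flower data.

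For the first factor I would solve the quadratic for the branch $W(Z)$ with $W(1)=1$, namely $W=(-Z+\sqrt{12-3Z^2})/2$, substitute $Z=1+\zeta$, and expand $\sqrt{9-6\zeta-3\zeta^2}=3\sqrt{1-2\zeta/3-\zeta^2/3}$ via the binomial series to obtain
$$\eta := W-1 = -\zeta - \tfrac{1}{3}\zeta^2 - \tfrac{1}{9}\zeta^3 - \tfrac{2}{27}\zeta^4 + O(\zeta^5).$$
For the second factor, a short direct calculation from the formula for $J$ yields the closed form
$$J(1+\eta)-1 \;=\; \frac{-\eta}{1-2\eta/(a-1)} \;=\; -\eta - \frac{2\eta^2}{a-1} - \frac{4\eta^3}{(a-1)^2} - \frac{8\eta^4}{(a-1)^3} + \cdots .$$

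Substituting the first series into the second and collecting by powers of $\zeta$, the coefficient of $\zeta^2$ in $\F_a(1+\zeta)-1$ is
$$\frac{1}{3}-\frac{2}{a-1}=\frac{a-7}{3(a-1)},$$
which is the generic formula and gives a single attracting petal whenever it is nonzero. The value $a=7$ is precisely where this quantity vanishes, so one must continue the bookkeeping. The coefficient of $\zeta^3$ works out to $\tfrac{1}{9}-\tfrac{4}{3(a-1)}+\tfrac{4}{(a-1)^2}$, which also vanishes at $a=7$; and the coefficient of $\zeta^4$, namely $\tfrac{2}{27}-\tfrac{2}{3(a-1)}+\tfrac{4}{(a-1)^2}-\tfrac{8}{(a-1)^3}$, evaluates at $a=7$ to $\tfrac{2}{27}-\tfrac{1}{9}+\tfrac{1}{9}-\tfrac{1}{27}=\tfrac{1}{27}$. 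The petal count (one or three) then follows from the standard Leau--Fatou description of the flower in terms of the first nontrivial coefficient.

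The main obstacle is purely the bookkeeping: one has to carry the square-root expansion accurately to order $\zeta^4$ and the geometric series for $J$ to order $\eta^4$, keeping track of signs and common denominators. There is no conceptual difficulty; indeed the double cancellation at $a=7$, where both the $\zeta^2$ and $\zeta^3$ terms vanish simultaneously, provides a useful internal consistency check on the calculation.
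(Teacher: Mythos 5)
Your proposal is correct and follows essentially the same route as the paper: factor $\F_a=J\circ Cov_0^Q$, expand the branch of $Cov_0^Q$ fixing $Z=1$ and the involution $J$ in the coordinate $\zeta=Z-1$ to order four, and compose, obtaining the coefficient $\frac{a-7}{3(a-1)}$ of $\zeta^2$ (whose square is the $\zeta^3$ coefficient) and the value $\frac{1}{27}$ for the $\zeta^4$ coefficient at $a=7$. All the intermediate series and coefficients you report agree with those in the paper's own computation.
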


\begin{proof}
By Lemma \ref{factorization}, $\F_a=J\circ Cov^Q_0$, where $J$ is the involution which has fixed points $1$ and $a$:
$$J(Z)={{(a+1)Z-2a}\over{2Z-(a+1)}}$$
and $Cov^Q_0: Z \to W$ where $Z^2+ZW+W^2=3$. Therefore the branch of $Cov^Q_0$ fixing $Z=1$
is $Z \to W$ where
$$W= \frac{-Z+(12-3Z^2)^{1/2}}{2}.$$
Changing coordinates to $\zeta,\omega$ where $Z=\zeta+1$ and $W=\omega+1$, so that the fixed
point is at $\zeta=0$, this branch of $Cov_0^Q$ becomes:
$$\omega=-\frac{\zeta}{2}+\frac{3}{2}\left(\left(1-\frac{2\zeta}{3}-\frac{\zeta^2}{3}\right)^{1/2}-1\right)
=-\zeta-\frac{\zeta^2}{3}-\frac{\zeta^3}{9}-\frac{2\zeta^4}{27}-\ldots$$
In these coordinates the involution $J$ is: 
$$\zeta \to -\zeta\left(\frac{1}{1-\frac{2\zeta}{a-1}}\right)
=-\zeta-\frac{2\zeta^2}{a-1}-\frac{4\zeta^3}{(a-1)^2}-\frac{8\zeta^4}{(a-1)^3}-\ldots$$
Composing the two power series and collecting up terms we deduce that the branch of 
$\F_a=J\circ Cov_0^Q$ which fixes $\zeta=0$ sends $\zeta$ to:
$$ \zeta + \frac{a-7}{3(a-1)}\zeta^2 + \left(\frac{a-7}{3(a-1)}\right)^2\zeta^3+
\left(\frac{2}{27}-\frac{2}{3(a-1)} +\frac{4}{(a-1)^2}-\frac{8}{(a-1)^3}\right)\zeta^4+\ldots$$ 
completing the proof.
\qed
\end{proof}

For $a\ne 7$ there is a unique {\it repelling direction} at the parabolic fixed point. 
From Proposition \ref{power_series}, in the $\zeta$ coordinate this is the direction
$$\zeta = \frac{{\bar a}-7}{{\bar a}-1}.$$
For $a=7$, there are three repelling directions: $\zeta=0, e^{2\pi i/3}, e^{4\pi i/3}$.
\\

\begin{defi}
Let $P$ be the parabolic fixed point of our correspondence $\F_a$, $a\ne 7$. We call the line defined by the repelling direction the {\it parabolic axis} at $P$, 
and we say that a differentiable curve $\ell$ passing through $P$ is  {\it transverse to the parabolic axis} if $\ell$ crosses this axis at a non-zero angle. (For $a=7$ we adopt the convention that the `parabolic axis' is the real axis, in both the $Z$-coordinate and the $z$-coordinate.)
\end{defi}




\begin{cor}\label{horizontals}
For $a\ne 7$, given any smooth curve $\ell$ passing through $P$ transversely to the parabolic axis, 
there is a repelling petal  $U_\theta^+$ and Fatou coordinate $\Phi^+$ on $U_\theta^+$ such that $\Phi^+(\ell)$
(in the $w=u+iv$ plane)
intersects every horizontal leaf $v=c$ in $V_\theta^+=\Phi^+(U_\theta^+)$ which corresponds to a sufficiently
large value of $|c|$.
\end{cor}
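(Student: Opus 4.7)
The plan is to exploit the explicit form $\Phi^+ = \psi^{-1} \circ \phi$ given by Proposition \ref{petals}, where $\phi(\zeta) = 1/(-b\zeta)$ and $b = (a-7)/(3(a-1)) \ne 0$ by Proposition \ref{power_series}. The strategy is to first analyse $\phi(\ell)$ directly, then pass to $\Phi^+(\ell)$ using that $\psi^{-1}$ is asymptotic to the identity at infinity.

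First, parametrize $\ell$ near $P$ as $\zeta(t) = \tau t + O(t^2)$ with $\tau \ne 0$ its tangent vector at $P$. The transversality hypothesis says $\tau$ is parallel neither to the repelling direction nor to the attracting direction, so $b\tau \notin \R$. A direct computation gives $\phi(\zeta(t)) = -1/(b\tau t) + O(1)$ as $t \to 0$, so $\phi(\ell)$ has two tails going to infinity in opposite non-real directions, namely the rays with arguments $\eta$ and $\eta + \pi$, where $\eta = -\arg(-b\tau) \notin \pi\Z$. Thus one tail has $\Im(w) \to +\infty$ and the other has $\Im(w) \to -\infty$.

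Next, I would choose $\theta$ close enough to $\pi$ that the sector of angle $2\theta$ about the repelling direction, contained in $U_\theta^+$, encloses both sides of $\ell$ near $P$. Denoting by $\alpha \in (0, \pi)$ the angle between $\tau$ and the repelling direction, this holds whenever $\theta > \max(\alpha, \pi - \alpha)$, which is possible precisely because transversality forces $\alpha \in (0, \pi)$. With this choice, $\ell \cap U_\theta^+$ contains a punctured neighbourhood of $P$ in $\ell$, and $\Phi^+$ sends each of its two arcs into $V_\theta^+$. Since $\psi^{-1}$ is asymptotic to the identity, the tails of $\Phi^+(\ell) = \psi^{-1}(\phi(\ell))$ have the same asymptotic directions as those of $\phi(\ell)$, so $\Im(\Phi^+(\zeta))$ still tends to $\pm\infty$ along the respective arcs.

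By continuity along each arc and the intermediate value theorem, $\Phi^+(\ell)$ therefore meets every horizontal leaf $\{v = c\} \cap V_\theta^+$ for $|c|$ sufficiently large, with positive $c$ realised on the tail with $\Im \to +\infty$ and negative $c$ on the other. The main technical point is the selection of $\theta$: it must be close enough to $\pi$ for $U_\theta^+$ to contain both branches of $\ell$ at $P$, and it is precisely the transversality hypothesis that makes this possible. No separate containment check is needed on the $w$-plane side, because $\Phi^+$ automatically sends $U_\theta^+$ into $V_\theta^+$ by construction.
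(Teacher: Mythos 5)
Your proof is correct and follows essentially the same route as the paper: choose $\theta$ large enough that both branches of $\ell$ eventually enter $U_\theta^+$, then use the asymptotics of $\Phi^+$ to see that the two tails of $\Phi^+(\ell)$ have $\Im(w)\to\pm\infty$, and conclude by the intermediate value theorem. You merely make explicit, via the decomposition $\Phi^+=\psi^{-1}\circ\phi$ from Proposition \ref{petals}(i), the asymptotic statement that the paper simply quotes, and you are in fact slightly more careful in requiring $\theta>\max(\alpha,\pi-\alpha)$ so that \emph{both} sides of $\ell$ lie in the sector.
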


\begin{proof} 
The line $\ell$ meets the repelling direction at $P$ at some angle $0<\alpha<\pi$. Choose $\theta$ with
 $\alpha<\theta<\pi$. By Proposition \ref{petals}, as we travel along $\ell$ towards $P$ from either side, the final 
part of our journey is contained in $U_\theta^+$. The result follows, since  $\Phi^+:U_\theta^+ \to V_\theta^+$ sends 
a line meeting the repelling direction at $P$ at angle $\alpha$ to a curve the points $w(t)$ of which have $\lim_{t\to \infty}|w(t)|=\infty$ 
and $\lim_{t\to\infty}\arg(w(t))=\pi -\alpha$.
\qed
\end{proof}

\begin{figure}
\centering
\psfrag{e}{\small $\phi$}
\psfrag{cov}{\tiny $\F^{-1}(\partial \Delta_{Cov})$}
\psfrag{dc}{\tiny $\partial \Delta_{Cov}$}
\psfrag{j}{\tiny $\partial \Delta_J$}
\psfrag{1}{\tiny $\ell_1$}
\psfrag{2}{\tiny $\ell_2$}
\psfrag{a}{\tiny $U_{\theta}^-$}
\psfrag{f}{\tiny $\Phi^-$}
\psfrag{e}{\tiny $V_{\theta}^-$}
	\includegraphics[width= 10cm]{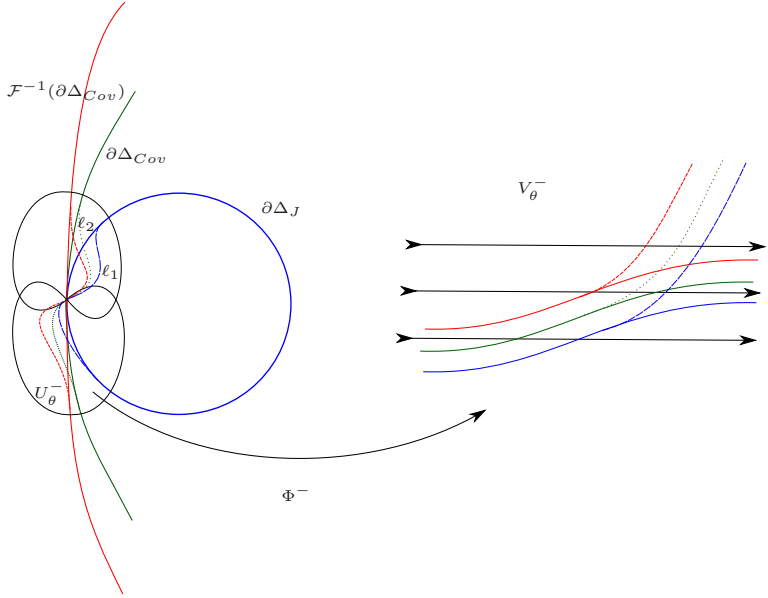}
\caption{\small Changing domains}
\label{modifdom}
\end{figure}

\begin{prop}\label{nice-domains} 
For $a \in {\mathcal K}$, we may always choose a Klein combination pair $(\Delta_{Cov},\Delta_{J})$ of
fundamental domains which have boundaries which are smooth at $P$ and transverse to the parabolic axis.
\end{prop}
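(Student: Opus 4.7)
The plan is to start with any Klein combination pair $(\Delta_{Cov},\Delta_J)$, guaranteed by the hypothesis $a\in\mathcal{K}$, and perform a purely local surgery on both boundaries inside a small ball $U$ around $P$, replacing their tangent directions at $P$ by a single direction transverse to the parabolic axis. The key point is that both $J$ and the branch $\iota$ of $Cov_0^Q$ fixing $P$ are biholomorphic involutions at $P$, hence each is linearisable near $P$ and admits smooth invariant arcs through $P$ in every tangent direction. In the $z$-coordinate one has $J(z)=-z$ outright, so the $J$-invariant smooth arcs through $P$ are exactly the Euclidean lines through the origin (equivalently, in the $Z$-coordinate, the circles through the two $J$-fixed points $1$ and $a$); for $\iota$ one pulls back lines through the origin under a linearising coordinate.

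Fix a direction $d$ at $P$ transverse to the parabolic axis, and let $\ell_{Cov}$ and $\ell_J$ be the $\iota$-invariant and $J$-invariant smooth arcs through $P$ with common tangent $d$. Shrinking $U$ and perturbing $d$ slightly if needed (while keeping it transverse to the parabolic axis), one arranges $\ell_{Cov}\cap\ell_J\cap U=\{P\}$, so that one of the arcs lies strictly on one side of the other inside $U$ and the two together partition $U\setminus\{P\}$ into two ``outer'' open regions and a thin ``crescent'' with $P$ as a cusp. Next, replace $\partial\Delta_{Cov}\cap U$ by $\ell_{Cov}\cap U$ and $\partial\Delta_J\cap U$ by $\ell_J\cap U$, splicing to the unchanged parts of the original boundaries by short arcs in a thin shell just outside $U$. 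By the $\iota$-invariance of $\ell_{Cov}$ and the $J$-invariance of $\ell_J$ the resulting sets $\Delta'_{Cov}$ and $\Delta'_J$ are again fundamental domains for $Cov_0^Q$ and $J$ respectively, and their boundaries are Jordan curves that are smooth at $P$ with tangent direction $d$.

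The delicate step is to preserve the Klein combination condition $\Delta'_{Cov}\cup\Delta'_J=\hat{\mathbb C}\setminus\{P\}$: outside $U$ it is inherited from the original pair, but inside $U$ it forces a specific combinatorial choice of sides. Had the two arcs been transverse at $P$ they would cut $U\setminus\{P\}$ into four sectors, and no pair of ``one-sided'' choices could cover all four; tangency at $P$ reduces the count to three regions, and the covering then holds precisely when $\Delta'_{Cov}\cap U$ and $\Delta'_J\cap U$ are each chosen to be the side of the respective arc containing the crescent, so that the two domains together cover both outer regions and overlap inside the crescent. This is the same local picture as in the standard pair of Proposition \ref{23}, where the curve $L'$ and the boundary circle both have vertical tangent at $P$ and the fundamental domains lie on opposite outer sides of these osculating curves; the surgery simply rotates this common tangent direction away from the parabolic axis whenever the original tangent fails to be transverse to it. The main obstacle is to align the side choices inside $U$ with those inherited from the original pair outside $U$, which can always be arranged by choosing $d$ close enough to the original common tangent direction or, more generally, by refining the splice inside the shell.
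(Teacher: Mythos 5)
Your strategy---a static local surgery at $P$ replacing both boundary arcs by invariant arcs of the two involutions $J$ and $\iota$ (the branch of $Cov_0^Q$ fixing $P$) with a common transverse tangent---is genuinely different from the paper's, which works in a Fatou coordinate of the parabolic composite $J\circ\iota$ and defines the new arcs dynamically: a curve $\ell_1$ in the repelling petal meeting $\F_a^{-1}(\ell_1)$ only at $P$, a curve $\ell_2$ squeezed between them, and their transports $J(\ell_1)$, $Cov_0^Q(\ell_2)$ to the attracting petal, so that the required nesting of the four arcs is built in. Your route could in principle be completed, but the step you yourself call ``the main obstacle''---aligning the side choices---is a genuine gap, not something curable by ``refining the splice inside the shell'': the problem occurs at $P$ itself, at arbitrarily small scales inside $U$, not in the shell.

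Concretely, work in the $z$-coordinate, so $J(z)=-z$, $\iota(z)=-z+az^2+\cdots$, and the parabolic branch is $J\circ\iota(z)=z+bz^2+\cdots$ with $b=-a$. The linearising coordinate of $\iota$ is $h(z)=z+(b/2)z^2+\cdots$, so your canonical $\iota$-invariant arc with tangent $d=e^{i\phi}$ is $t\mapsto td-(b/2)t^2d^2+O(t^3)$, whose signed normal deviation from the line $\ell_J=\R d$ is $-\tfrac12 t^2\,\mathrm{Im}(b e^{i\phi})+O(t^3)$. This sign is the same at both ends of the arc (so you do get a crescent), but it is \emph{forced} by $d$, and it flips exactly as $d$ crosses the parabolic axis, since $\mathrm{Im}(be^{i\phi})=0$ characterises the attracting/repelling directions. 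The covering condition forces $\ell_{Cov}\setminus\{P\}$ to lie on the globally determined $\Delta'_J$-side of $\ell_J$, so the construction succeeds for exactly one of the two open half-circles of transverse directions and fails for the other. In the only case where surgery is needed the original common tangent \emph{is} the parabolic axis, i.e.\ it sits on the dividing line between the good and bad half-circles, so ``choose $d$ close to the original direction'' does not decide between tilting clockwise and anticlockwise, which give opposite outcomes. Nor does the freedom in choosing the $\iota$-invariant arc help: if one half $\gamma$ has normal deviation $\epsilon t^2$, then $\iota(\gamma)$ has deviation $(C-\epsilon)t^2$ with $C=-\mathrm{Im}(be^{i\phi})$, so both halves lie on the required side only if $0<\epsilon<C$, which again demands $C>0$. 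To close the gap you must identify, in terms of $b$ and of which side of $\ell_J$ the original $\Delta_J$ occupies near $P$, which tilt produces the crescent covered by both domains, and verify it; you should also check that enlarging $\Delta_{Cov}$ near $P$ does not violate disjointness from its image under the \emph{other} branch of $Cov_0^Q$, which carries a neighbourhood of $P$ onto a neighbourhood of $S$.
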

\begin{proof}
By definition the Jordan curves bounding  $\Delta_{Cov}$ and $\Delta_{J}$ meet only at $P$. 
By making small perturbations to these curves if need be, we can ensure they are both smooth, apart from
an angle of $2\pi/3$ on $\partial\Delta_{Cov}$ at the double critical point ($Z=\infty)$ of $Q$. At $P$ the smooth 
curves $\partial\Delta_{Cov}$ and $\partial\Delta_J$ are tangent to one another (since the Klein combination 
condition excludes the possibility that they cross). For $a \in int({\d})$, that is $|a-4|<3$, the boundaries of the standard 
pair $(\Delta_{Cov},\Delta_{J})$ at their intersection $P$ ($Z=1)$ are parallel to 
the imaginary axis in the $Z$-plane, and as $a$ lies inside the circle in the $Z$-plane 
which has diameter the real interval $[1,7]$, we know that
$$arg\left(\frac{\bar{a}-7}{\bar{a}-1}\right)\ne \pm \frac{\pi}{2}$$ so the parabolic axis is tranverse
to the imaginary axis and we are done. When $a=7$, by our convention the parabolic axis is the real axis, 
which is transverse to the imaginary axis, so again we are done. 

\medskip
However for 
$a \in \partial \d\setminus \{7\}$ the boundaries of the standard pair are {\it tangent} 
to the parabolic axis, and so small horocycles at $P$ are tangent to $\partial \Delta_{J}$ there.
We shall see that in this situation, by making a small modification to the boundaries of the standard pair near $P$,
we can construct a new Klein combination pair which have boundaries transverse to the parabolic axis.
 More generally, for $a\in {\mathcal K}$ not necessarily in $\overline{\mathcal D}$, suppose we have Klein combination domains
$\Delta_J$ and $\Delta_{Cov}$ whose boundaries approach $P$ tangentially to the parabolic 
axis at $P$. Choose an angle $0<\theta<\pi/2$ and attracting and repelling petals $U_\theta^\pm$ which are 
sufficiently small that they do not intersect. Using the fact that the invariant foliations on these 
petals give us a complete picture of the dynamics of ${\mathcal F}_a$ on them, we can modify 
the part of $\partial\Delta_J$ which lies in the repelling petal
by replacing a small segment by a curve $\ell_1$ which approaches $P$ transversely to the parabolic axis 
and meets $Cov^Q_0(J(\ell_1))(={\mathcal F}_a^{-1}(\ell_1))$ only at the point $P$ (figure \ref{modifdom}). 
Next we modify $\partial\Delta_{Cov}$ on the same petal, replacing a segment with a 
curve $\ell_2$ lying between ${\mathcal F}_a^{-1}(\ell_1)$ and $\ell_1$. Finally on the attracting petal
we replace a segment of $\partial\Delta_J$ by $J(\ell_1)$ and a segment of $\partial\Delta_{Cov}$
by $Cov^Q_0(\ell_2)$. Since $Cov^Q_0$ acts on a neighbourhood of $P$ as an involution with 
fixed point $P$, rotating one side of figure \ref{modifdom} to the other, we see that $\ell_1\cup J(\ell_1)$ meets
$\ell_2\cup Cov^Q_0(\ell_2)$ only at $P$, and so we can use these as boundaries of modified fundamental
domains which still satisfy the Klein combination condition. 
\qed
\end{proof}

\begin{defi}\label{limitset} For $a \in {\mathcal K}$, with $(\partial\Delta_{Cov},\partial\Delta_J)$ chosen
with boundaries transverse to the parabolic axis at $P$, the forward limit set of ${\mathcal F}_a$ is defined 
to be 
$$\Lambda_{a,+}=\bigcap_{n=0}^\infty \F_a^n(\hat{\mathbb C}\setminus\Delta_{J}),$$
the backward limit set is defined to be
$$\Lambda_{a,-}=\bigcap_{n=0}^\infty \F^{-n}(\overline{\Delta}_{J})=J(\Lambda_{a,+})$$
and the limit set 
is defined to be
$\Lambda_a=\Lambda_{a,+}\cup\Lambda_{a,-}$, noting that by Proposition \ref{domains} we have $\Lambda_{a,+}\cap \Lambda_{a,-}=\{P\}$.
The regular set $\Omega_a$ is defined to be $\hat{\mathbb C}\setminus\Lambda_a$.
\end{defi}
Note that, by Proposition \ref{domains}, the sets $\Lambda_a$ and $\Omega_a$ are 
completely invariant under $\F_a$, and the involution $J$ conjugates $\F_a$ on $\Lambda_{a,-}$ to 
$\F_a^{-1}$ on $\Lambda_{a,+}$ 
(see also the fifth of the `Comments on Theorem 2' in \cite{B}).  
\begin{remark} The partition of  $\hat{\mathbb C}$ into $\Lambda$ and $\Omega$ is independent of the choice of Klein
combination domains, provided these domains have boundaries transverse to the parabolic axis at $P$.
For what can go wrong if we do not make this requirement, see Remark \ref{bad-domains} following 
the proof of Theorem A below. 
\end{remark}


\begin{defi} The connectedness locus for the family ${\mathcal F}_a$ 
is the subset ${\mathcal C}_\Gamma$ of $\mathcal K$
for which $\Lambda_{a,-}$, and hence also $\Lambda_{a,+}$ and $\Lambda_a$, is connected.
\end{defi}

Since $\Delta_{Cov} \cup \Delta_{J} = \hat{\mathbb C}\setminus\{P\},$
the proof of Theorem 2 in \cite{B}, which is a version for correspondences of the Klein Combination Theorem \cite{K,Mas}
(sometimes informally known as the `Ping-Pong Theorem'),
shows that ${\mathcal F}_a$ acts on $\Omega_a$ properly discontinuously (see the 4th point of Theorem 2 in \cite{B}) and faithfully
(since it acts freely on the set $\Omega'_a$ obtained from $\Omega_a$ by removing the grand orbit of fixed points of $J$ and $Cov_0^Q$), with fundamental 
domain $$\Delta=\Delta_{Cov} \cap \Delta_J.$$
(The theorem in \cite{B} is stated for correspondences $\F=Cov^P * Cov^Q$, where $P,\ Q$ are rational maps and $Cov^P$ and $Cov^Q$ are the covering correspondences. Writing $J(z)=-z$ and $P(z)=z^2$ we have $J=Cov_0^P$ and thus our $\F_a$ has the form 
$Cov_0^P\circ Cov_0^Q$. Note that if $Cov^P * Cov^Q$ acts freely on $\Omega'_a$, then $Cov^P_0 \circ Cov^Q_0$ acts faithfully on $\Omega_a$, where $Cov^P_0,\,Cov^Q_0$ are the deleted covering correspondences of $P$ and $Q$ respectively.)

\section{Proof of Theorem A}\label{thma}

We start by observing that by Definition \ref{limitset} and Proposition \ref{domains}, for every 
$a \in {\mathcal C}_{\Gamma}$ 
the regular set $\Omega= \Omega_a$
is open and simply connected, and therefore there exists a Riemann map 
$\phi: \Omega \rightarrow \H$. We will now prove that:
\begin{enumerate}
 \item there exist M\"obius transformations $\sigma$ of order $2$ and $\rho$ of order $3$, both in $PSL_2(\R)$, such that $\phi$
 conjugates $\F_a$ to  $\{\sigma\rho,\sigma\rho^{-1}\}$;
 \item the free product $\langle\sigma\rangle * \langle\rho\rangle$ is a faithful and discrete representation of $C_2*C_3$ in $PSL_2(\R)$;
 \item this representation is conjugate to $PSL_2(\Z)$.
\end{enumerate}

\textbf{Step 1.} 
Note that 
on a neighborhood of $\infty$ the B\"ottcher map conjugates the map $Q(Z)=Z^3-3Z$ to the map $Z \rightarrow Z^3$. It follows that on a neighbourhood of $Z=\infty$, the covering correspondence of $Q$ is conjugate to that of $Z \to Z^3$ via a homeomorphism $\hat\varphi$, say.
This can be extended to a conjugacy $\hat\varphi$ on the whole of $\Omega$, since the only critical point of $Q$ on $\Omega$ 
is the double critical point at $Z= \infty$. Thus  
$Cov^Q$ on $\Omega$ is conjugate via $\hat\varphi$ to $\{I, \hat \rho, \hat\rho^2 \}$ on some simply-connected open set 
$\Omega'\subset \hat{\C}$, where 
$\hat\rho(z)=e^{2\pi i/3}$, and so $Cov^Q_0$ is conjugate to $\{\hat\rho, \hat\rho^2 \}$. 
If $R: \Omega' \rightarrow \H$ is a Riemann map, $\phi: R\circ\hat\varphi: \Omega \rightarrow \H$ is a Riemann map conjugating the action of $Cov^Q_0$ on $\Omega$ to the action of an order $3$ rotation
$\rho$ on $\H$. 
On the other hand, since $J|_\Omega$ is an involution,  $J|_\Omega$ is conjugate by $\phi$ to some involution $\sigma$ on $\H$.
Therefore $\F_a= J \circ Cov^Q_0$ is conjugate by $\phi$ on $\Omega$ to  $\{\sigma\rho,\sigma\rho^{-1}\}$.

\textbf{Step 2.} By the correspondence ping-pong theorem (\cite{B}) we have that $\F_a$ acts on $\Omega$ faithfully and properly discontinuously.
Since $\phi$ is a homeomorphism, 
$\langle\sigma\rangle * \langle\rho\rangle$ also acts faithfully and properly discontinuously on $\H$. Therefore (since $\sigma$ is an involution and $\rho$ is an order $3$ rotation)
$\langle\sigma\rangle * \langle\rho\rangle$ is a faithful and discrete representation of $C_2*C_3$ in $PSL_2(\R)$.

\textbf{Step 3.}
To complete the proof we must prove that the representation of $C_2*C_3$ on ${\mathbb H}$ is the
standard representation as $PSL(2,{\mathbb Z})\subset PSL(2,{\mathbb R})$.
For every discrete representation of $C_2*C_3$ the orbifold ${\mathbb H}/(\langle\sigma\rangle * \langle\rho\rangle)$ 
is conformally isomorphic to a sphere with a $2\pi/3$-cone point, a $\pi$-cone point, and
either a single boundary component or a puncture point (a cusp is conformally equivalent to a neigbourhood of a puncture point). 
The representation is conjugate to $PSL_2({\mathbb Z})$ if and only if
the orbifold  ${\mathbb H}/(\langle\sigma\rangle * \langle\rho\rangle)$ has a puncture point.
Since $\phi$ is an isomorphism, ${\mathbb H}/(\langle\sigma\rangle * \langle\rho\rangle)$ is conformally equivalent to
$\Omega/\langle{\mathcal F}_a\rangle$.
By Proposition \ref{power_series} the point $P$ ($Z=1$) is a parabolic 
fixed point of ${\mathcal F}_a$. Let $(\Delta_{J},\Delta_{Cov})$ be a Klein combination pair with boundaries transverse to the parabolic axis (such a pair exists by Proposition \ref{nice-domains}). 
By Proposition \ref{petals} there exists a repelling petal $U_\theta^+$ containing all points of the line $\partial\Delta_{J}$ which lie sufficiently close to 
$P$, and by Corollary \ref{horizontals} the image of this line under the Fatou coordinate $\Phi^+$ meets all lines $v=c$ in the $w$-plane (where $w=u+iv$) which have $|c|$ sufficiently large. Writing $W$ for the intersection between $\Delta_{J}\setminus {\F}_a^{-1}(\Delta_{J})$ and the petal, we deduce that for $|c|$ sufficiently large, $\Phi^+(W)$ intersects the  horizontal line $v=c$. 
So $W\setminus \{P\}$, after quotienting by the boundary identification induced by $\F_a^{-1}$, is 
conformally bijective to a 
pair of neighbourhoods of the ends of $V_\theta^+/\langle w\to w-1\rangle$, that is to a pair of punctured discs. Hence 
$\Omega/\langle \F_a \rangle$ has a pair of puncture points (one either side of the parabolic axis) 
corresponding to $P$. \qed


\begin{remark}\label{bad-domains} If we were to choose $\Delta_J$ and $\Delta_{Cov}$ with 
boundaries approaching $Z=1$
tangentially to the parabolic axis, then the image under $\Phi^+$ of points of $\partial\Delta_J$ sufficiently close to $P$
might lie below some level $v=c$, in which case $(W\setminus\{P\})/\F_a$ would be an annulus rather than a punctured
disc and we would find that the new set $\Omega$ would differ from that in the case of a transverse intersection: a
horodisc at $Z=1$, together with the grand orbit of this horodisc, would be excised from the $\Omega$ of the transverse case. 
The representation of $C_2*C_3$ on 
${\mathbb H}$ would no longer be that of $PSL(2,\Z)$, but $\Lambda({\mathcal F}_a)$ would also be changed, 
by the addition of a countable union of discs, attached at the points of the grand orbit of $Z=1$. In Definition \ref{limitset} we required
$\Delta_J$ and $\Delta_{Cov}$ to have boundaries transverse to the parabolic axis, in order that the partition of
$\hat \C$ into $\Omega$ and $\Lambda$ be uniquely defined.
\end{remark}

\section{Proof of Theorem B}\label{thmb}

\subsection{Properties of the $2$-to-$1$ branch of ${\mathcal F}_a$ which fixes $\Lambda_{a,-}$}
For the proof of Theorem B we shall need to convert the branch of $\F_a$ which fixes $\Lambda_{a,-}$ into a parabolic-like map by quasiconformal surgery. The next two results set the scene.
Proposition \ref{dyn} ensures that this branch of $\F_a$ is locally holomorphic everywhere but on a neigbourhood of $S$
(the preimage of the parabolic fixed point). Proposition \ref{lune} ensures we have a sector at $S$ which can support the surgery that will turn the branch into a parabolic-like map.
\begin{prop}\label{dyn}
For every $a \in {\mathcal K}$, the restricted correspondence
$${\mathcal F}_a |:\ {\mathcal F}_a^{-1}(\Delta_{J}) \to \Delta_{J}$$
is a single-valued holomorphic map of degree two.

For each $Z\in \partial {\mathcal F}_a^{-1}(\Delta_{J})$, with the exception of $Z=S$, the pre-image of the parabolic fixed point $P$ other than $P$ itself,
there exists a neighbourhood of $Z$ on which  
${\mathcal F}_a |$ extends to a (single-valued) holomorphic map.

There exists a neighbourhood of $S$ on which ${\mathcal F}_a |$ extends locally to a $1$-to-$2$ holomorphic correspondence, the image of which is a neighbourhood of $P$. This correspondence between neighbourhoods of $S$ and $P$ is conformally conjugate to the $1$-to-$2$ correspondence $\zeta \to \pm\sqrt{\zeta}$ from the unit disc to itself.

\end{prop}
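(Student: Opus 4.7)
My plan is to exploit the factorization $\mathcal{F}_a=J\circ Cov_0^Q$ from Lemma \ref{factorization} together with an explicit local analysis of the algebraic relation $F(Z,W)=Z^2+ZW+W^2-3=0$ defining $Cov_0^Q$. The three assertions correspond to (i) upgrading the continuous single-valued branch of Proposition \ref{domains} to a holomorphic one, (ii) locating the critical points of $\mathcal{F}_a$ and verifying that none except $S$ lie on $\partial\mathcal{F}_a^{-1}(\Delta_J)$, and (iii) writing down the local normal form at $S$.

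For (i) I would start from Proposition \ref{domains}: $\mathcal{F}_a|\colon\mathcal{F}_a^{-1}(\overline{\Delta}_J)\to\overline{\Delta}_J$ is continuous, single-valued and $2$-to-$1$. Since the graph of $Cov_0^Q$ is the smooth algebraic curve $F=0$ and $J$ is a global biholomorphism, the implicit function theorem supplies holomorphic local branches $W=W(Z)$ of the correspondence at every point where $\partial_W F=Z+2W\ne 0$. A continuous single-valued selection must locally coincide with one such holomorphic branch, so the restricted map is holomorphic, and degree $2$ follows from the $2$-to-$1$ property. The zero locus of $\partial_W F$ on $F=0$ is the pair of critical points $(Z,W)=(\pm 2,\mp 1)$; one is $S=-2$ mapping to $J(1)=1=P$, while the other, $Z=2$ mapping to $J(-1)=(3a+1)/(a+3)$, lies outside $\mathcal{F}_a^{-1}(\overline{\Delta}_J)$. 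For the standard combination pair on $\mathcal{D}$ this is a direct check showing $(3a+1)/(a+3)$ sits in the excluded disc; for the Klein combination pairs of Proposition \ref{nice-domains}, the same follows either by a perturbative argument or by the Riemann--Hurwitz constraint that a degree--$2$ map onto the topological disk $\overline{\Delta}_J$ admits at most one critical point in its domain. At any $Z\in\partial\mathcal{F}_a^{-1}(\Delta_J)\setminus\{S\}$, including the parabolic fixed point $Z=P$ of Proposition \ref{power_series}, the implicit function theorem then yields a local holomorphic branch extending $\mathcal{F}_a|$, proving (ii).

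For (iii) I would pass to local coordinates $\zeta=Z+2$, $\eta=W-1$. Direct substitution into $Z^2+ZW+W^2=3$ gives
\[
\eta^2+\zeta\eta+\zeta^2-3\zeta=0,
\]
equivalently $\eta^2=3\zeta\bigl(1+O(|\zeta|+|\eta|)\bigr)$: the standard normal form of a simple branch point of a $1$-to-$2$ forward holomorphic correspondence, whose image covers a full neighborhood of $P$. Since $J$ is a biholomorphism at $W=1$ with $J'(1)=-1$, post-composition by $J$ amounts to an analytic change of coordinate in $\eta$ and leaves the branching structure intact. To reach the literal model $\tilde\eta^2=\tilde\zeta$ I would set $\tilde\zeta=3\zeta$ and define $\tilde\eta=\eta\cdot k(\eta)$ with $k$ the holomorphic square root of a unit at the origin, so that $\tilde\eta^2=3\zeta$ holds identically on the correspondence; a further rescaling produces the conformal conjugacy with $\zeta\mapsto\pm\sqrt{\zeta}$ on the unit disc. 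The only genuinely subtle point of the whole argument is the claim in (ii) that the other critical point $Z=2$ misses $\mathcal{F}_a^{-1}(\overline{\Delta}_J)$ for every admissible Klein combination pair; everything else reduces to local implicit-function-theorem computations.
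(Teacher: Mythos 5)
Your handling of parts (i) and (iii) is essentially the paper's argument made explicit. For (i) the paper likewise quotes Proposition \ref{domains} and the fact that a single-valued holomorphic correspondence on an open set is automatically a holomorphic map; for (iii) it reads the branch point off the explicit formula $W=(-Z\pm\sqrt{12-3Z^2})/2$ and declares the conjugacy to $\zeta\mapsto\pm\sqrt{\zeta}$ immediate, so your computation $\eta^2+\zeta\eta+\zeta^2-3\zeta=0$ followed by the unit-times-square-root change of variable is a correct filling-in of that step. Part (ii) is where you genuinely diverge: the paper never locates the second branch point at all. It observes that any $Z\in\partial\F_a^{-1}(\Delta_J)$ with $\F_a(Z)\ne P$ can be engulfed by deforming $\Delta_J$ slightly to another Klein combination domain (possible because $\partial\Delta_J$ and $\partial\Delta_{Cov}$ are forced to meet only at $P$), and then reapplies part (i) to the new pair. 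Your route instead applies the implicit function theorem away from the zero set of $\partial_W F=Z+2W$ and must check that the resulting branch point $Z=2$ avoids $\F_a^{-1}(\overline{\Delta}_J)$.

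That check is the one place where your argument, as written, has a hole for a general Klein combination pair, and neither of your proposed repairs closes it. Riemann--Hurwitz gives $\chi(\F_a^{-1}(\Delta_J))=2-c$ with $c$ counting critical points in the \emph{open} domain; since $S$ maps to $P\in\partial\Delta_J$ it is not counted, so the hypothetical configuration in which $2\in\F_a^{-1}(\Delta_J)$, $c=1$ and the domain is a disc is perfectly consistent with Riemann--Hurwitz, which therefore excludes nothing. A perturbation from the standard pair is likewise unavailable for an arbitrary $a\in\mathcal K$ and an arbitrary pair. The fix, however, is one line: $-1$ is a fixed point of $Cov_0^Q$ (indeed $1+1+1=3$), so $-1$ lies in no fundamental domain $\Delta_{Cov}$, and the Klein combination condition then forces $-1\in\Delta_J$; since $\F_a^{-1}(\overline{\Delta}_J)=Cov_0^Q(\hat{\C}\setminus\Delta_J)$ and $Cov_0^Q(2)=\{-1\}$, it follows that $2\notin\F_a^{-1}(\overline{\Delta}_J)$ for \emph{every} Klein combination pair. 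With that substitution your proof is complete, and it has the mild advantage over the paper's of exhibiting exactly where the forward correspondence branches.
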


\begin{proof}
The fact that ${\mathcal F}_a |:\ {\mathcal F}_a^{-1}(\Delta_{J}) \to \Delta_{J}$ is a (single-valued) holomorphic map follows at once from Proposition 3.4, since an $n$-to-$1$ holomorphic correspondence defined on an open set is necessarily a holomorphic map. 

Moreover, given any $Z\in \partial {\mathcal F}_a^{-1}(\Delta_{J})$ which does not map to $P$ (the point $Z=1$), we may deform the boundary of $\Delta_{J}$ (without altering that of $\Delta_{Cov}$) in such a way that $Z$ now lies in the interior of the deformed $\Delta_{J}$, so the second statement also follows from Proposition 3.4.

Finally, a neighbourhood of $S$ ($Z=-2$) is 
mapped $1$-to-$2$  by $\F_a$ to a neighbourhood
of $P$ ($Z=1$), since $\F_a=J\circ Cov^Q_0$, and $Cov_0^Q:Z \rightarrow W$ is the $1:2$ correspondence which has formula 
$W=(-Z\pm \sqrt{12-3Z^2})/2$. The local conjugacy to $\zeta \to \pm \sqrt{\zeta}$ is immediate from the formula.
\qed
\end{proof}

%

\begin{prop}\label{lune}
 For every $a \in \mathcal K$ and Klein combination pair $(\Delta_{J},\Delta_{Cov})$ for $\F_a$, 
with boundaries transverse to the parabolic axis at the fixed point $P$, there exist a closed topological disc 
$V_a\subset \hat\C$ and angles $\theta_1=\theta_{a,1}>0$ and 
$\theta_2=\theta_{a,2}>0$, with $\theta_1+\theta_2<\pi$, with the following properties:
 \begin{enumerate}
 \item $\Lambda_{a,-}\subset V_a$ and $\Lambda_{a,-}\cap\partial V_a=\{P\}$; 
\item the boundary $\partial V_a$ of $V_a$ is smooth away from the parabolic fixed point $P$, where it meets $\partial \Delta_{J}$
at angles $\theta_1$ and $\theta_2$ (so at $P$ the boundary $\partial V_a$ has a `cone' of angle $\hat\theta =\pi - (\theta_1 + \theta_2)$);.
\item  $V'_a=\F_a^{-1}(V_a) \subset V_a$, and $\partial V_a' \cap \partial V_a = \{P\}$;
\item the boundary $\partial V_a'$ of $V_a'$ is smooth everywhere but at $P$, where it forms 
a cone of angle ${\hat \theta}$, and at the preimage $S$ of $P$, where it forms a cone of
angle $2\hat \theta$;
\item inside every neighbourhood of $P$ there exist $\F_a$-invariant arcs $\gamma_i:[0,1]\to {\bar V}_a$, $i=1,2$,
emanating from $P$ on the two sides of the parabolic axis, each $C^1$ and satisfying $\gamma_i(t)[1/2,1)\subset V_a\setminus V'_a$.
 \end{enumerate}
\end{prop}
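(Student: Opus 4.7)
The plan is to construct $V_a$ in two stages: a local cusp-shaped piece near $P$ built from the Fatou coordinate on a repelling petal, and a global arc enclosing the rest of $\Lambda_{a,-}$.

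First, I would apply Proposition \ref{petals} to obtain a repelling petal $U^+$ at $P$ with Fatou coordinate $\Phi^+\colon U^+ \to V^+$ conjugating the branch of $\F_a^{-1}$ that fixes $P$ to the translation $w\mapsto w-1$. By the transversality of $\partial\Delta_J$ to the parabolic axis together with Corollary \ref{horizontals}, the image $\Phi^+(\partial\Delta_J\cap U^+)$ crosses every horizontal line $v=c$ for $|c|$ sufficiently large. Fix an angle $\hat\theta\in(0,\pi)$ and a large constant $C>0$, and consider the cone $\Sigma=\{w=u+iv\in V^+ : u<-|v|\cot(\hat\theta/2)+C\}$, which opens toward $u=-\infty$ with opening angle $\hat\theta$. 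By construction $\Sigma-1\subset\Sigma$ strictly, and the boundaries $\partial\Sigma$ and $\partial(\Sigma-1)$ (parallel V's, offset by $1$ in $u$) meet only in the limit $u\to-\infty$, which corresponds to $P$ under $(\Phi^+)^{-1}$. Using the asymptotic form $\Phi^+\sim 1/(-bz)$ from Proposition \ref{petals}, the preimage $(\Phi^+)^{-1}(\Sigma)$ has two boundary arcs approaching $P$ tangent to directions making angles $\pm\hat\theta/2$ with the repelling direction. I would then extend to a global topological disc $V_a$ by joining the two endpoints of these arcs on $\partial U^+$ with a smooth Jordan arc in $\overline{\Delta}_J$ enclosing the compact set $\Lambda_{a,-}\setminus U^+$. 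The parameters $\hat\theta$ and $C$ can be adjusted so that $\partial V_a$ meets $\partial\Delta_J$ at $P$ at angles $\theta_1,\theta_2>0$ with $\theta_1+\theta_2=\pi-\hat\theta<\pi$.

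Setting $V'_a=\F_a^{-1}(V_a)$, I would then verify the five properties in turn. Property (1) holds because $P$ is the $u=-\infty$ limit of $\Sigma$ and the global arc was chosen to enclose the remainder of $\Lambda_{a,-}$. Property (2) is by construction. For (3), near $P$ the inclusion $V'_a\subset V_a$ reduces to $\Sigma-1\subset\Sigma$, with boundaries touching only at $P$; away from $P$, Proposition \ref{dyn} says $\F_a$ is a proper $2$-to-$1$ holomorphic map, and choosing the global arc of $\partial V_a$ slightly outside its $\F_a^{-1}$-image produces strict inclusion there. For (4), $\partial V'_a$ is smooth away from $P$ and $S$ by Proposition \ref{dyn}; at $P$ the branch of $\F_a^{-1}$ fixing $P$ is tangent to the identity and hence preserves the cone angle $\hat\theta$; while the other local branch of $\F_a^{-1}$ near $P$ sends $P$ to $S$ via a map conformally conjugate to $w\mapsto w^2$ (by Proposition \ref{dyn}), which doubles angles and yields the cone of angle $2\hat\theta$ at $S$. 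For (5), I would apply Proposition \ref{petals} to obtain an attracting petal $U^-$ at $P$ with Fatou coordinate $\Phi^-$ conjugating $\F_a$ to $w\mapsto w+1$; the preimages under $(\Phi^-)^{-1}$ of horizontal lines $v=c_1>0$ and $v=c_2<0$ are $\F_a$-invariant $C^1$ arcs $\gamma_1,\gamma_2$ emanating from $P$ tangent to the two sides of the parabolic axis, and with an appropriate choice of $|c_i|$ and reparameterization, $\gamma_i([1/2,1))\subset V_a\setminus V'_a$.

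The main obstacle lies in the angle bookkeeping at $P$. To achieve the prescribed angles $\theta_1,\theta_2$ one must convert the asymptotic directions of $\partial\Sigma$ at infinity in the $w$-plane into tangent directions at $P$ in the $z$-plane via $\Phi^+\sim 1/(-bz)$, and then use the transversality of $\partial\Delta_J$ together with a suitable choice of $C$ to position the two boundary arcs of $V_a$ at the required angles with $\partial\Delta_J$. Structurally, the key point is that the branch of $\F_a^{-1}$ at $P$ taking $P$ to $S$ is locally conjugate to $w\mapsto w^2$, so it doubles the cone angle at the ramification point; this angle doubling, together with the identity-tangent behaviour at $P$, is precisely what makes $(V'_a,V_a)$ fit the geometric profile of a parabolic-like domain/range pair in the sense of Definition \ref{parabolic-like-map}, setting up the surgery of Theorem B.
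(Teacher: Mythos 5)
Your local construction near $P$ is essentially the paper's: both replace $\partial \Delta_{J}$ inside a repelling petal by curves which, in the Fatou coordinate, go off to infinity at definite angles to the horizontal, producing the cone of angle $\hat\theta$ at $P$, and the angle--doubling at $S$ via the local model $\zeta\to\pm\sqrt{\zeta}$ of Proposition \ref{dyn} is exactly the intended argument for item (4). But there are genuine gaps. First, your global boundary is an unspecified ``smooth Jordan arc enclosing $\Lambda_{a,-}\setminus U^{+}$'' which you propose to choose ``slightly outside its $\F_a^{-1}$-image'': that is circular, since lying outside one's own preimage is precisely the dynamical property to be established, and an arbitrary enclosing arc will not have it. The paper avoids the issue by keeping $\partial\Delta_{J}$ itself as the boundary away from $P$ (Proposition \ref{domains} already gives $\F_a^{-1}(\overline{\Delta}_{J})\subset\overline{\Delta}_{J}$ with boundaries meeting only at $P$) and confining the modification to $\Delta_{J}\setminus\F_a^{-1}(\Delta_{J})$ and its backward orbit, where the new boundary is required to cross each $\F_a^{-n}(\partial\Delta_{J})$ and each leaf of an invariant foliation exactly once; that is what makes item (3) ``immediate''. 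You should do the same, and also check that the two boundary arcs of your cone avoid $\Lambda_{a,-}$ (the paper does this by observing that they lie in $\bigcup_{n}\F_a^{-n}\bigl(\Delta_{J}\setminus\F_a^{-1}(\Delta_{J})\bigr)$, whose points eventually leave $\Delta_{J}$).

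Second, item (5) cannot be obtained from horizontal lines in the attracting Fatou coordinate. The required arcs satisfy $\F_a(\gamma_i(t))=\gamma_i(2t)$ with $\gamma_i([1/2,1))\subset V_a\setminus V'_a$, so points move \emph{away} from $P$ along them and a fundamental segment joins $\partial V'_a$ to $\partial V_a$: such arcs must reside in the repelling petal, as Definition \ref{parabolic-like-map} itself demands. The curves $(\Phi^{-})^{-1}(\{v=c\})$ with $|c|$ moderate converge to $P$ under forward iteration along the attracting direction, which lies in the complement of your cone, so they are not even contained in $\overline{V}_a$; for $|c|$ large they become horocycles, and only their repelling-side portions are usable, after which you would still need to verify that the segment between $\partial V_a$ and $\F_a^{-1}(\partial V_a)$ lies in $V_a\setminus V'_a$. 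The paper instead takes a $C^{1}$ cross-cut $n_i$ of $\Delta_{J}\setminus\F_a^{-1}(\Delta_{J})$ from a point $R_i\in\partial\Delta_{J}$ to $\F_a^{-1}(R_i)$, with angles at the endpoints summing to $\pi$, and sets $\gamma_i=\bigcup_{n}\F_a^{-n}(n_i)$. Finally, you do not treat $a=7$, where the fixed point has three petals, Proposition \ref{petals} does not apply as stated (the quadratic coefficient vanishes), and the paper shows that $\hat\theta$ is then forced to exceed $2\pi/3$ rather than being arbitrarily small.
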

\begin{proof}
 
By Proposition \ref{nice-domains}, for every $a \in \mathcal{K}$ we can choose  a Klein combination pair $(\Delta_{Cov},\Delta_{J})$ of
fundamental domains which have boundaries which are smooth at $P$ and transverse to the parabolic direction.
%
By Proposition \ref{domains}, 
$\F_a^{-1}(\overline{\Delta}_{J})\subset \overline{\Delta}_{J}$. We shall construct
$V_a$ by making a small change to the boundary of $\Delta_{J}$ in a neighbourhood of $P$, so that while
$V_a$ is not a fundamental domain for $J$ it retains the property that $\F_a^{-1}(V_a)\subset V_a$ and gains
the other properties listed.\\

Suppose firstly that $a\ne 7$, so we are in the `single petal' case.
Let $\ell$ denote $\partial \Delta_{J}$ and let
the angles at $P$ between $\ell$ and the parabolic axis be $\alpha_1$ and 
$\alpha_2=\pi - \alpha_1$. Choose $\theta$ such that $\mbox{max}(\alpha_1,\alpha_2)<\theta <\pi$ 
(such an angle $\theta$ exists since 
we started from a  Klein combination pair $(\Delta_{Cov},\Delta_{J})$  which have boundaries which are smooth at $P$ and transverse to the parabolic direction).
Let $U^+_{\theta}$ be a repelling petal containing an open sector of angle $2 \theta$ centered at $P$ 
given by Proposition \ref{petals}, and $\Phi^+: U^+_{\theta} \rightarrow V^+_{\theta}$ be a repelling Fatou coordinate (where $V_{\theta}$ is the subset of $\C$ consisting to all the $w=u+iv$ to the left of a curve which has 
asymptotes $u=-|v|\cot(\theta)-c$, with $c$ large).
As $\ell$ is $C^1$ at $P$, and so is $F_a^{-1}(\ell)$, with the same tangent at $P$, we know that for every point 
$R\in \ell$ sufficiently close to $P$ the open straight line segment (in whatever coordinate we are working in) from $R$ to 
$\F_a^{-1}(R)$ lies in $\Delta_{J}\setminus \F_a^{-1}(\Delta_{J})$. Thus we can foliate
the intersection $W$ between $\Delta_{J}\setminus \F^{-1}(\Delta_{J})$ and a suitable neighbourhood of $P$ by straight line segments. 
The set $W$ has two components, which we denote $W_1$ and $W_2$, one each side of the parabolic axis at $P$. Write $D_i$ ($i=1,2$) for $\bigcup_{n=0}^{\infty}\F_a^{-n}(W_i)$. The sets $D_i$ are foliated
by piecewise-linear leaves, each of which is invariant under $\F_a^{-1}$ and crosses each 
line $\F_a^{-n}(\ell)$ exactly once. In $\Phi^+(D_i)\subset V_\theta$ they become leaves invariant under $w\to w-1$. 
Consider a set of these leaves which are integer distances apart at the points where they meet $\Phi^+(\ell)$. Together 
with the lines $(\Phi^+(\F_a^{-n}(\ell)))_{n \geq 0}$ they create a `skew grid' in each of the 
$\Phi^+(D_i)$, $i=1,2$.\\


Choose $0<\theta_1<\alpha_1$ and  $0<\theta_2<\alpha_2$.
Using the skew grids as  coordinate systems, we can now construct in each $\Phi^+(D_i),\ i=1,2$, a 
smooth curve $m_i$ which at one end joins $\Phi^+(\ell)$ smoothly, 
at the other is asymptotic to a line at angle $\theta_i$ to the horizontal as $w$ tends to infinity, 
and in between crosses each leaf of 
the foliation exactly once, and each line $\Phi^+(\F_a^{-n}(\ell))$ exactly once. 
Note that the lines $(\Phi^+)^{-1}(m_i)$ lie outside $\Lambda_{a,-}$ since every point of 
$(\Phi^+)^{-1}(D_i)$ eventually leaves $\Delta_{J}$ under
some iterate of the branch of $\F_a$ which fixes $P$.
We now define $V_a$ to be the domain bounded by $\ell=\partial\Delta_{J}$ as modified by 
$(\Phi^+)^{-1}(m_1)$ and $(\Phi^+)^{-1}(m_2)$ in a neighbourhood of $P$, and define 
$V'_a$ to be $\F_a^{-1}(V_a)$.
The first three properties stated in the 
Proposition are immediate, and the 4th property follows from Proposition \ref{dyn}.
Finally, for the 5th property we note that each leaf of the foliation satisfies all the requirements 
except that it is piecewise-linear and not (in general) $C^1$. 
We rectify this
by replacing a chosen straight line leaf in each $W_i$, $i=1,2$, by a $C^1$ curve $n_i$ with the same end-points, say $R_i \in \ell$ and ${\mathcal F}_a^{-1}(R_i)$, such that $n_i$ meets $\ell$ and ${\mathcal F}_a^{-1}(\ell)$ at angles which sum to $\pi$: we then set $\gamma_i$ to be
$\bigcup_{n=0}^\infty \F_a^{-n}(n_i)$, parametrized appropriately.\\

In the case $a=7$ we have to modify the argument above to allow for the fact that we have 
three attracting petals and three
repelling petals. We omit details, but remark that the key difference is that whereas for $a\ne 7$ we can choose
$\theta_1$ and $\theta_2$ such that ${\hat \theta}=\pi-(\theta_1+\theta_2)$ is arbitrarily small,
for $a=7$, with the standard domains, by taking Fatou coordinates on appropriate overlapping attracting and 
repelling petals, one can show that $\theta_1$ and $\theta_2$ must satisfy $\hat \theta>2\pi/3 > 0$ but can be chosen with $\hat \theta$ arbitrarily close to $2\pi/3$.
\qed

\end{proof}


\subsection{Proof of Theorem B}\label{1}
By Proposition 5.1, for every $a \in {\mathcal K}$, and therefore in particular for every $a \in {\mathcal C}_\Gamma$, the 
correspondence ${\mathcal F}_a$, restricted to a neighbourhood of 
$\Lambda_{a,-}$, satisfies all the conditions necessary for it to be a 
{\it parabolic-like map} in the sense of \cite{L1}, except one: on a neighbourhood
of the point $S={\mathcal F}_a^{-1}(P)\setminus \{P\}$ it is not a single-valued map, as such a neighbourhood is mapped one-to-two 
onto a neighbourhood of $P$. However by redefining
${\mathcal F}_a$ on a `sector' at $P$ lying outside $\Lambda_{a,-}$, and adjusting the 
complex structure on this sector and its 
inverse images, we shall now modify a restriction of the branch of ${\mathcal F}_a$ fixing $\Lambda_{a,-}$, to yield 
a parabolic-like map $\widetilde \F$.\\

By Proposition \ref{lune}, at the parabolic fixed point $P$ the boundary $\partial V'$ of $V'$ forms a cone 
of angle $\hat \theta=\pi-(\theta_1+\theta_2)$, and at the 
preimage $S$ of $P$ it forms a cone of angle $2 \hat \theta$. 
Possibly by reducing $\theta_1,\theta_2$ and $\hat \theta$ we can choose $\epsilon >0$ small enough so that the round disc  
$D_2= \D(S, \epsilon)$ intersects $V'$ in a sector of angle $2 \hat \theta$, so that $D_1= \F(D_2)$ intersects
$V'$ in a sector of angle $\hat \theta$, and moreover $\partial V \cap \gamma_i \not \subset \overline D_1$
(where $V$ is the set, $\gamma_1,\gamma_2$ the invariant arcs, and $\hat \theta$ the angle given by Proposition \ref{lune}). 
Hence denoting by $\hat T_2$ the sector $(\pi-2 \theta_2,S,\pi+2 \theta_1)$ and by
 $\hat T_1$ the sector $(3\pi/2-\theta_2,P,\pi/2+ \theta_1)$, both $\hat T_1$ and $\hat T_2 $ are outside
$V'$,  and in particular
 $\hat T_2 \in V \setminus V'$.
 Set $\phi : D_2 \rightarrow \D$,  $\phi(z)= (z-S)/\epsilon)$,
and let $\psi: D_1 \rightarrow \D$ be the 
Riemann map sending $P$ to $0$. Then $\phi \circ \F^{-1} \circ \psi^{-1}$ is a degree $2$ proper and holomorphic map from the unit 
disc into itself, with a unique fixed point at $z=0$, and so pre- or post-composing with a rotation we can assume it to be
the map $P_0(z)=z^2$.\\

We are now going to modify $\F$ on $\hat T_2$ by quasiconformal surgery.
Lift to logarithm coordinates, and define the quasiconformal map  
$$G:  \{ x+iy\,\,|\,\,x<0,\,\, y \in [\pi,3\pi]\} \rightarrow \{
x+iy\,\,|\,\, x<0,\,\,y \in [0,2 \pi]\}$$ as follows:
$$ G(z)=\left\{
\begin{array}{cl}
2z-2\pi i&\mbox{on  } \{ x+iy\,\,|\,\,x<0,\,\, y \in [\pi, 3\pi/2-\theta_2]\}\\
\mbox{qc interpolation  } &\mbox{on  }  \{ x+iy\,\,|\,\,x<0,\,\,y \in [3\pi/2-\theta_2,5\pi/2+\theta_1]\}\\
2z-4\pi i&\mbox{on  } \{ x+iy\,\,|\,\,x<0,\,\, y \in [5\pi/2+\theta_1, 3 \pi]\} \\
\end{array}\right.
$$
Then the map $f= \phi^{-1} \circ exp  \circ G \circ log \circ \psi : D_1 \rightarrow D_2$ is also quasiconformal.
\begin{figure}
\centering
\psfrag{e}{\small $\phi$}
\psfrag{3}{\small $3\pi i$}
\psfrag{c}{\small $\psi$}
\psfrag{0}{\small $0$}
\psfrag{1}{}
\psfrag{2}{}
\psfrag{1'}{\small $(5\pi/2+\theta_1)i$}
\psfrag{2'}{\small $(3\pi/2-\theta_2)i$}
\psfrag{t}{}
\psfrag{p}{\small $\pi i$}
\psfrag{i}{\small $P_0$}
\psfrag{f}{\small $f$}
\psfrag{S}{\small $S$}
\psfrag{D2}{\tiny }

 \psfrag{P}{\small $2\pi i$}
 \psfrag{a}{\small $D_2$}
\psfrag{b}{\small $D_1$}
 \psfrag{7}{\small $3\pi/2- \theta_2$}
\psfrag{6}{\small $\pi/2 +\theta_1$}
\psfrag{A}{\small $\pi -2 \theta_2$}
\psfrag{B}{\small $\pi +2 \theta_1$}
\psfrag{T}{}
\psfrag{l}{\small $log$}
\psfrag{r}{\small $2z-4\pi i$}
\psfrag{s}{\small qc interpolation}
\psfrag{w}{\small $2z-2\pi i$}
\psfrag{y}{}
\psfrag{x}{}
\psfrag{y'}{\small $\hat T_1$}
\psfrag{x'}{\small $\hat T_2$}
	\includegraphics[width= 10cm]{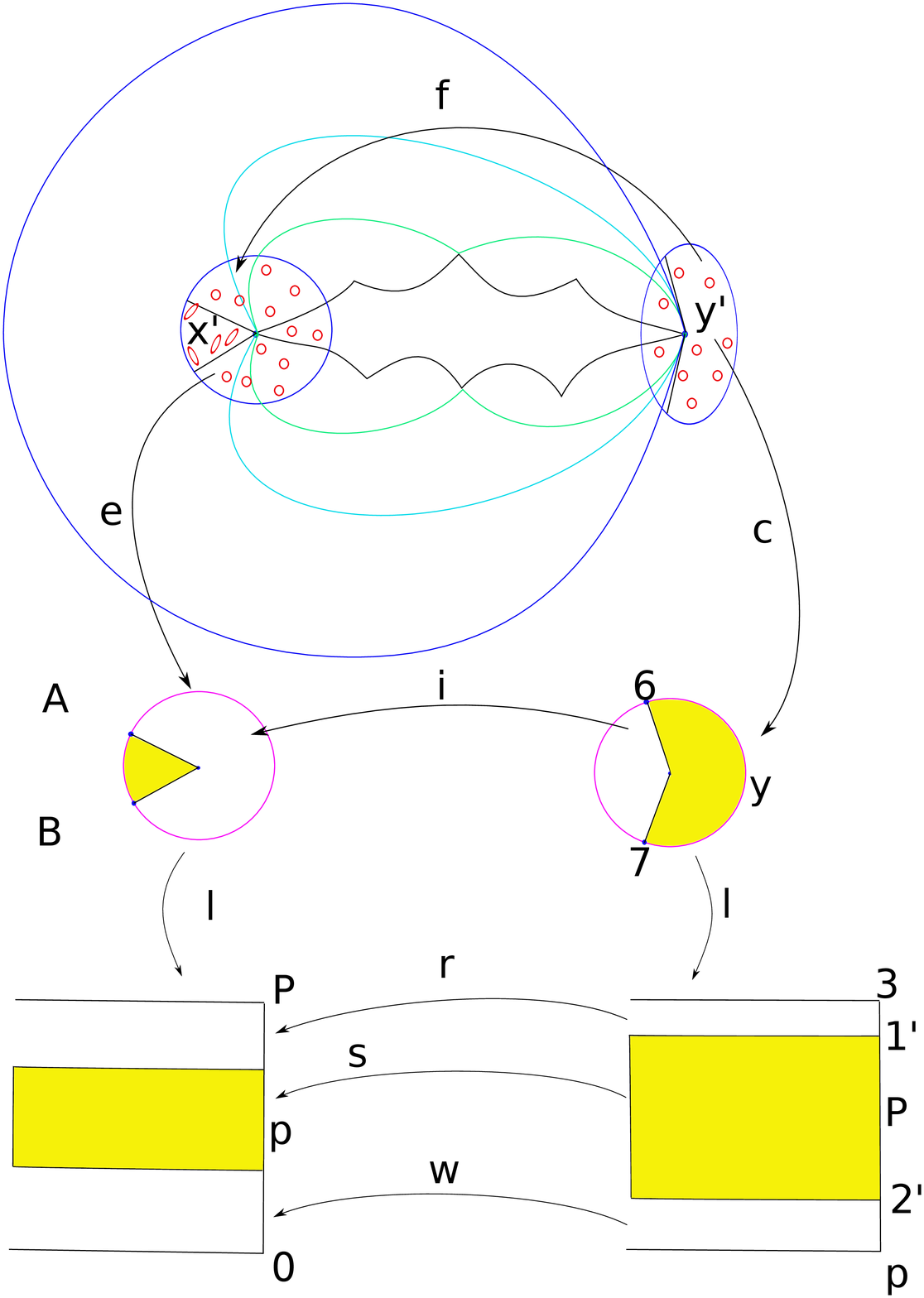}
\caption{\small surgery construction}
\label{C0}
\end{figure}
Define $U= V \cup D_1$, $U'=\F^{-1}(U)$, and the map 
$F : U' \rightarrow U$ to be:
$$ F =\left\{
\begin{array}{cl}
f^{-1} &\mbox{on  } D_2\\
\F &\mbox{on  } U' \setminus D_2 \\
\end{array}\right.
$$
The map $F : U' \rightarrow U$ is continuous, because it coincides with $\F$ everywhere but on 
$\hat T_2$,
and along the boundaries of $\hat T_2$ inside $D_2$ it is continuous by construction. So the map $F$
is quasiregular, proper of degree $2$, and holomorphic everywhere but on the sector
$\hat T_2$.

Setting $\tilde \mu = (f^{-1})^*(\mu_0)$, and spreading $\tilde \mu$ by the 
dynamics of $F$, we obtain on $U$ the Beltrami form:
$$ \overline \mu =\left\{
\begin{array}{cl}
\widetilde \mu &\mbox{on  } \hat T\\
(F^n)^*\widetilde \mu &\mbox{on  } F^{-n}(\hat T) \\
\mu_0 &\mbox{on  } U \setminus F^{-n}(\hat T) \\
\end{array}\right.
$$
Since the sector $\hat T_2$ lies outside $F^{-1}(V)$,
it follows that $F^{-i}(\hat T_2)$
lies outside $F^{-i-1}(V)$, and therefore the preimages of the sectors
$F^{-i}(\hat T_2)$ where we change the structure do not intersect each others. Hence the Beltrami form $\overline \mu$
is $F$-invariant, and by the Measurable Riemann Mapping Theorem there exists a quasiconformal map
$\varphi: U \rightarrow  \D$ such that $\varphi^*\mu_0=\overline \mu$.
Let us define 
$$\widetilde \F:=\varphi \circ F \circ \varphi^{-1}\,: \mathcal{V}'=\varphi(U') \rightarrow  \mathcal{V}=\varphi(U),$$
and set $\gamma_{+}= \varphi(\gamma_1) \cap \overline{\mathcal{V}}$ and  
$\gamma_{-}= \varphi(\gamma_2) \cap \overline{\mathcal{V}}$ (where $\gamma_1$ and $\gamma_2$ are the 
invariant arcs given by Proposition \ref{lune}).
Then $\gamma= \gamma_+ \cup \gamma_-$ is a dividing arc in the sense of 
Definition  \ref{parabolic-like-map},
and $(\widetilde \F,\mathcal{V}',\mathcal{V},\gamma)$ is a degree $2$ parabolic-like map, with filled Julia set 
$K=\varphi (\Lambda_-)$. The map $\widetilde \F$
is quasiconformally conjugate to $\F$ everywhere but on the sector $\hat T_2$ and its image, which do not
intersect the filled Julia set $K$. Moreover, this quasiconformal conjugacy is holomorphic everywhere but 
on the preimages of $\hat T_2$ (which do not
intersect the filled Julia set $K$). Therefore $\widetilde \F$ is hybrid conjugate to $\F$ on $K$. By the 
Straightening Theorem for parabolic-like maps (see \cite{L1}),
this implies that $\F$ is hybrid conjugate to a member of the family $Per_1(1)$ on $\Lambda_-$.
\qed


\begin{thebibliography}{13}
\bibitem[Be]{Be} A. Beardon, \emph{Iteration of rational functions}, Graduate texts in Mathematics 132, Springer (2000).
\bibitem[BF]{BodilNuria} B. Branner and N. Fagella, \textit{Quasiconformal Surgery in Holomorphic Dynamics}, Cambridge University Press, 2014.
\bibitem[BV]{BV} V. Buchstaber, A. Veselov, \textit{Integrable Correspondences and Algebraic Representations of Multivalued Groups}, Int. Math. Res. Notices 8 ($1996$), 381--399
 \bibitem[BP]{BP} S. Bullett, C. Penrose, \textit{Mating quadratic maps with the modular group}, Invent. math., Vol
$115$, ($1994$), 483--511.
\bibitem[BHai]{BH} S. Bullett, P. Haissinsky,  \textit{Pinching holomorphic correspondences}. Conform. Geom. Dyn. 11 (2007), 65--89 (electronic).
\bibitem[BHar]{BH1} S. Bullett, W. Harvey, \textit{Mating quadratic maps with Kleinian groups via quasiconformal surgery},
Electronic Research Announcements of the AMS, 6 (2000), 21--30.
\bibitem[BL1]{BL1} S. Bullett, L. Lomonaco, \textit{Dynamics of Modular Matings}, July 2017, arXiv: 1707.04764.
\bibitem[BL2]{BL2} S. Bullett, L. Lomonaco, \textit{The Mandelbrot Set for  Modular Matings}, manuscript.
\bibitem[B]{B} S. Bullett, \textit{A Combination Theorem for Covering Correspondences and an Application to Mating Polynomial Maps with Kleinian Groups}, Conform. Geom. Dyn., 4(2000),75--96.
\bibitem[DH]{DH} A. Douady \& J. H. Hubbard, \textit{On the Dynamics of Polynomial-like Mappings}, Ann. Sci. \'{E}cole Norm. Sup.,(4), Vol.18 (1985), 287-343.
\bibitem[F]{F} P. Fatou, \textit{Sur l'it\'eration de certaines fonctions alg\'ebriques}, Bull. Sci. Math. (2), 46 ($1922$), 188--198
\bibitem[Fr]{Fr} J. Franel, \textit{Les suites de Farey et le probl\`eme des nombres premiers}, Nachr. Ges. Wiss. G\"ottingen, Math.-phys. K1 (1924), 198--201.
\bibitem[K]{K} F. Klein, \textit{Neue Beitr\"age zur Riemann'schen Functionentheorie}, Math. Ann. 21 (1883), 141--218.
\bibitem [L]{L1}  L. Lomonaco. \textit{Parabolic-like mappings}. Erg. Theory and Dyn. Syst. 35 (2015), 2171--2197. 
\bibitem[Lan]{Lan} E. Landau, \textit{Bemerkungen zu vorstehended Abhandlung von Herr Franel}, Nachr. Ges. Wiss. G\"ottingen, Math.-phys. K1 (1924) 202--206.
\bibitem[Mas]{Mas} B. Maskit, \textit{On Klein's Combination Theorem}, TAMS, 120 (1965), 499--509.
\bibitem[MS]{MS} C. McMullen, D. Sullivan, \textit{Quasiconformal homeomorphisms and dynamics III: the Teichm\"uller space of a holomorphic dynamical system}, Adv. in Math. 135 ($1998$) 351--395.
\bibitem[M]{M} J. Milnor, \emph{Dynamics in One Complex Variable}, Annals of Mathematics Studies No. 160
(2006).
\bibitem[M2]{M2} J. Milnor, \emph{Dynamics in One Complex Variable}, Stony Brook Lecture Notes (1990/91) https://arxiv.org/pdf/math/9201272v1.pdf 
\bibitem[Min]{Min} H. Minkowski, \emph{Zur Geometrie der Zahlen}, Verhandlungen des III. internationalen Mathematiker-Kongresses in Heidelberg, Berlin (1904), pp 171--172.
\bibitem[S]{S} D. Sullivan, Quasiconformal homeomorphisms and dynamics. I. Solution of the Fatou-Julia problem on wandering domains, \textit{Ann of Math.},(3), Vol.$122$ ($1985$), 401-18.

\end{thebibliography}
\end{document}